\def\bfZ{{\bf Z}}
\def\bfR{{\bf R}}
\def\bfC{{\bf C}}
\def\calB{{\mathcal B}}
\def\bfm{{\mathbf{m}}}
\def\calC{{\mathcal C}}
\def\calL{{\mathcal L}}
\def\calO{{\mathcal O}}
\def\calF{{\mathcal F}}
\def\calP{{\mathcal P}}
\def\calS{{\mathcal S}}
\def\calZ{{\mathcal Z}}
\def\calX{{\mathcal X}}
\def\gp{{\rm gp}}
\def\ol{\overline}
\def\oM{{\ol{M}}}
\def\oR{{\ol{R}}}
\def\oQ{{\ol{Q}}}
\def\ophi{{\ol{\phi}}}
\def\oP{{\ol{P}}}
\def\rmlog{{\rm log}}
\def\..{{,\dots,}}
\def\:{{\colon}}
\def\int{{\rm int}}
\def\Spec{{\rm Spec}}
\def\toisom{{\xrightarrow\sim}}
\def\into{\hookrightarrow}
\def\toto{\rightrightarrows}
\newtheorem{theor}{Theorem}[section]
\newtheorem{prop}[theor]{Proposition}
\newtheorem{defn}[theor]{Definition}
\newtheorem{lem}[theor]{Lemma}
\newtheorem{conj}[theor]{Conjecture}
\def\Cay{{\mathcal{C}}}
\DeclareMathOperator{\conv}{conv}
\DeclareMathOperator{\Span}{Span}
\DeclareMathOperator{\AffSpan}{AffSpan}
\DeclareMathOperator{\Ob}{Ob}
\DeclareMathOperator{\supp}{supp}
\DeclareMathOperator{\ind}{index}
\DeclareMathOperator{\Cone}{Cone}
\theoremstyle{definition}
\newtheorem{defin}[theor]{Definition}
\newtheorem{rem}[theor]{Remark}
\newtheorem{exam}[theor]{Example}
\begin{document}

\author{Karim Adiprasito}
\author{Gaku Liu}
\author{Michael Temkin}

\date{\today}

\title{Semistable reduction in characteristic 0}

\address{Einstein Institute of Mathematics, The Hebrew University of Jerusalem, Giv'at Ram, Jerusalem, 91904, Israel}
\email{adiprasito@math.huji.ac.il}

\address{Max Planck Institute for Mathematics in the Sciences, 04103 Leipzig, Germany}
\email{gakuliu@gmail.com}

\address{Einstein Institute of Mathematics, The Hebrew University of Jerusalem, Giv'at Ram, Jerusalem, 91904, Israel}
\email{michael.temkin@mail.huji.ac.il}

\keywords{semistable reduction, Cayley polytopes, mixed subdivisions}
\thanks{K.A. was supported by ERC StG 716424 - CASe and ISF Grant 1050/16. G.L. was supported by NSF grant 1440140.
M.T. was supported by ERC Consolidator Grant 770922 - BirNonArchGeom and ISF Grant 1159/15.
The authors thank Dan Abramovich for interest in this work, useful discussions and valuable comments.
}

\begin{abstract}
%We address the semistable reduction conjecture of Abramovich and Karu: we prove that every surjective morphism of complex projective varieties can be modified to a semistable one.
In 2000 Abramovich and Karu proved that any dominant morphism $f\:X\to B$ of varieties of characteristic zero can be made weakly semistable by replacing $B$ by a smooth alteration $B'$ and replacing the proper transform of $X$ by a modification $X'$. In the language of log geometry this means that $f'\:X'\to B'$ is log smooth and saturated for appropriate log structures. Moreover, Abramovich and Karu formulated a stronger conjecture that $f'\:X'\to B'$ can be even made semistable, which amounts to making $X'$ smooth as well, and explained why this is the best resolution of $f$ one might hope for. In this paper, we solve the semistable reduction conjecture in the larger generality of finite type morphisms of quasi-excellent schemes of characteristic zero.
\end{abstract}

\maketitle

\section{Introduction}

\subsection{The semistable reduction conjecture}
Resolution of singularities of an integral scheme $X$ is a classical and difficult problem of finding a modification $X'\to X$ with a regular source. The relative analogue seeks to improve a dominant morphism $f\:X\to B$ of integral schemes via a base change $B'\to B$ and a modification $X'$ of the proper transform $(X\times_BB')^{\rm pr}$ such that $f'\:X'\to B'$ is ``as smooth as possible''. Probably the main credit for coining the latter problem should be given to Mumford and the groundbreaking papers \cite{DM} and \cite{KKMS}, where $B$ was a trait, $B'\to B$ was a finite cover, and one achieved that $f'$ is {\em semistable}, that is, \'etale locally of the form $\Spec(k[t_0\ldots t_n]/(t_0\ldots t_m-\pi))$. In \cite{dejong-curves}, de Jong proved that semistable reduction of relative curves is possible for a wide class of bases $B$, including varieties, once one allows alterations of the base. (Note also in a somewhat different direction, one can use works of Alexeev, Koll\'ar and Shepherd-Barron (see \cite{alex1}, \cite{alex2}) to obtain a minimal model reduction theorem in the case of relative surfaces.)

To the best of our knowledge, the case of arbitrary $\dim(B)$ and $\dim(X)-\dim(B)$ was first studied by Abramovich and Karu in \cite{AK}. They showed that in the characteristic zero case at the very least the notion of semistability should be relaxed as follows: $X$ and $B$ are regular and possess local parameters $t_1\.. t_n$ and $\pi_1\..\pi_l$ such that $f^\#(\pi_i)=t_{n_i+1}\ldots t_{n_{i+1}}$ for $0=n_1<n_2<\ldots<n_{l+1}\le n$, see also \S\ref{semistabsec}. Then they proposed in \cite[Conjecture~0.2]{AK} a best possible conjecture for projective varieties over $\bfC$, which we formulate in the maximal expected generality of qe schemes:

\begin{conj}\label{conj:AK}
Let $f\:X\rightarrow B$ denote a dominant morphism of finite type of integral qe schemes. Then there is a projective alteration $B' \rightarrow B$, and a projective modification $X'\rightarrow (X\times_B B')^{\rm pr}$ such that $f'\:X' \rightarrow B'$ is semistable.
\end{conj}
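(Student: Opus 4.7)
The plan is to bootstrap from Abramovich--Karu's weakly semistable reduction and reduce the remaining problem to a combinatorial statement about unimodular mixed subdivisions of Cayley polytopes.

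The first step is to apply \cite{AK} after extending it from projective complex varieties to the quasi-excellent setting. In characteristic zero this extension should be essentially formal: the combinatorial construction of \cite{AK} is characteristic-free, and any analytic inputs used over $\bfC$ can be replaced by the resolution-of-singularities and log-desingularization results now available for qe schemes of characteristic zero. The output is an alteration $B_1\to B$ and a modification $X_1\to(X\times_B B_1)^{\rm pr}$ with $f_1\colon X_1\to B_1$ weakly semistable: $f_1$ is log smooth and saturated, $B_1$ is regular, and only $X_1$ may still possess toroidal singularities. It then remains to produce a projective modification $X'\to X_1$ with $X'$ regular and $f'\:X'\to B_1$ still log smooth and saturated; no further alteration of the base should be needed.

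Second, translate this to combinatorics. Locally in the Kummer étale topology, $f_1$ is modelled on a toric morphism $\Spec(R[P])\to\Spec(R[Q])$ coming from an injective, saturated, integral morphism of sharp fs monoids $\varphi\:Q\hookrightarrow P$ with $Q$ free. A semistable local model then corresponds to a subdivision of the cone $\sigma_P$ dual to $P$ into unimodular simplicial subcones whose images under $\varphi^*$ remain faces of $\sigma_Q$. Dually, this is a subdivision of the fiber polytopes of $\sigma_P\to\sigma_Q$ into unimodular lattice simplices that fit together coherently across fibers, which is precisely a unimodular \emph{mixed subdivision} of the Cayley polytope built from these fiber polytopes.

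The main obstacle, and the heart of the proof, is the combinatorial theorem that every Cayley polytope of lattice polytopes $P_1,\dots,P_l$ admits a unimodular mixed subdivision. My approach would be to start from any regular (coherent) mixed subdivision --- produced via a generic choice of height function --- and refine its non-unimodular cells by star subdivisions at interior lattice points while preserving the mixed Minkowski-sum structure. The two-dimensional base case is already delicate and should be handled by a Stern--Brocot type continued-fraction algorithm; higher dimensions would follow by slice-by-slice induction together with secondary-polytope techniques to assemble local refinements into globally coherent ones. Because the combinatorial recipe can be arranged to be functorial under inclusions of faces, the local unimodular mixed subdivisions descend to a canonical subdivision of the Kato fan of $X_1$, hence to a global projective modification $X'\to X_1$; unimodularity of the cells then yields regularity of $X'$, the mixed property yields log smoothness and saturation of $f'$, and coherence of the subdivision yields projectivity of the modification.
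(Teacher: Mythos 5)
There is a genuine gap, and it sits exactly where the real difficulty of this conjecture lies. Your Step 1 asserts that after weakly semistable reduction ``no further alteration of the base should be needed,'' and accordingly your combinatorial heart is the claim that \emph{every} Cayley polytope of lattice polytopes admits a unimodular mixed subdivision. Both claims are false. Already over a trait (the KKMS situation) a weakly semistable family whose fiber polytope is an empty non-unimodular simplex (e.g.\ a Reeve tetrahedron) cannot be made semistable by a modification of $X$ alone: such a simplex has no lattice points besides its vertices, hence no unimodular triangulation, so one must dilate, i.e.\ perform a further ramified base change. This is why the correct combinatorial statement (the paper's Theorem~\ref{thm:polytopes}, refining \cite[Conjecture 8.4]{AK}) necessarily involves a positive integer $c$ and a projective alteration $B_1\to cB$ of the base, realized geometrically by root stacks / Kummer-type covers (or, after Kawamata's trick, by an alteration $B'\to B$ as in the conjecture). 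Your version with no base dilation is false even for a single factor, so the reduction you propose cannot close.

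The same obstruction breaks your proposed proof of the combinatorial step: refining non-unimodular cells ``by star subdivisions at interior lattice points'' is impossible precisely for empty simplices of index $>1$, which is the generic hard case; this is why the KMW theorem needs dilation and Waterman (box) points rather than interior points. The paper's argument works with box points of $G_P=(\bfZ^d\cap(L_P\otimes\bfR))/L_P$, whose representatives live in weighted Minkowski sums $\sum_j c_jP_j$ (hence only become available after dilating the base), and constructs a \emph{canonical}, projective, index-lowering triangulation of Cayley polytopes $\Cay(AP)$ (Lemmas~\ref{lem:polysubdv} and \ref{lem:prebox}), with the canonicity/quasi-locality being exactly what allows the local subdivisions to glue over all cells of the base. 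Your sketch asserts functoriality under face inclusions but gives no mechanism for it, and secondary-polytope/generic-height constructions are notoriously non-canonical; making the index-lowering subdivisions compatible across faces is the point that blocked progress since \cite{AK} and is resolved here via the machinery of \cite{ALPT} and \cite{Santos}. Finally, your Step 1 extension of \cite{AK} to qe schemes is not ``formal'' as stated; the paper instead invokes the log smooth reduction of \cite{ATW-relative} (and \cite{tame-distillation} in positive characteristic) before applying Theorem~\ref{semistableth}.
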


\subsection{Previous works}
Abramovich and Karu proved a weak version of this conjecture for varieties of characteristic zero. First, they proved toroidal (or log smooth) reduction in \cite[Theorem~2.1]{AK}: $f'$ can be made toroidal. As in \cite{KKMS}, further improvements of $f'$ can be done by toroidal (or combinatorial) methods. In particular, the semistable reduction conjecture was reduced to a question about polyhedral subdivisions, see \cite[Conjecture 8.4]{AK}. Finally, toroidal methods were used to make $f'$ weakly semistable in the following sense: $B'$ is regular and $f'$ is flat and has geometrically reduced fibers, see \cite[Theorem~0.3 and \S0.8.2]{AK}. Thus, the gap from achieving semistable reduction was reduced to making $X'$ regular, but this turned out to be a difficult task. In \cite{Karu} Karu managed to solve it when $\dim(X)-\dim(B)\le 3$, and no further progress has been achieved until now. Nevertheless, the following aspects were improved in the meantime:

First, the language of toroidal geometry is being gradually replaced by a more robust language of log geometry, and we will use the latter in this paper. In particular, toroidal (resp. weakly semistable) morphisms were reinterpreted as log smooth (resp. and saturated) morphisms of log smooth log schemes. In addition, Molcho showed in \cite{Molcho} that one can use non-representable modifications (and toric stacks) instead of alterations, and this allows to proceed from the step of toroidal reduction in a canonical way.

Second, log smooth reduction was extended in \cite[Theorem~4.3.1]{tame-distillation} to a wide class of qe schemes at cost of allowing $X'\to(X\times_BB')^{\rm pr}$ to be an alteration whose degree is only divisible by primes non-invertible on $B$ (see also \cite[Section 3]{X}).

Third, if $B_0\subseteq B$ is open, $X_0=X\times_{B_0}B\to B_0$ is smooth, and the characteristic is zero, then log smooth reduction can be achieved by modifications $B'\to B$ and $X'\to X$ that restrict to isomorphisms over $B_0$ and $X_0$, see \cite{ATW-relative}.

\subsection{Main results}
Our main new ingredient is Theorem \ref{mainth} resolving the combinatorial conjecture of Abramovich and Karu. Lifting it to log schemes we obtain Theorem~\ref{mainlogth} about resolution of monoidal structure of morphisms of log schemes. In particular, it resolves log smooth morphisms to semistable ones, see Theorem~\ref{semistableth}. Similarly to \cite{Molcho}, we use root stacks and non-representable modifications $B'\to B$ instead of alterations to obtain the sharpest result. Using Kawamata trick one can then deduce the usual formulation with $B'\to B$ an alteration. Finally, we apply Theorem~\ref{semistableth} to improve the results on resolution of morphisms we have mentioned earlier. In particular, Theorem~\ref{mainapp} resolves a strong version of Conjecture~\ref{conj:AK} in characteristic zero, which also addresses divisors $Z\into X$ and controls the modification locus.

Finally, let us say a couple of words about our proof of Theorem \ref{mainth}. It can be viewed as a relative analogue of a difficult theorem of \cite{KKMS} on polyhedral subdivisions, so its resistance to attacks in the past is not so surprising. A breakthrough was obtained in the recent work \cite{ALPT} of the three authors and Pak, where a local version was established, and the current paper builds on the methods of \cite{ALPT} to obtain a complete solution. We would like to mention that a recent work \cite{Santos} of Haase, Paffenholz, Piechnik and Santos provided a relatively simple new proof of the subdivision theorem, and its constructions with Cayley polytopes were used as a starting point for the methods of \cite{ALPT} and this paper.

\section{Conical complexes}\label{conicalsec}

\subsection{The combinatorial conjecture of Abramovich and Karu}
In this subsection we recall the combinatorial semistable reduction conjecture from \cite{AK} in the slightly increased generality of maps with non-trivial vertical parts. In addition, it will be important that our complexes do not have to be connected.

\subsubsection{Terminology}
We follow the notation and terminology on rational cones, polyhedral complexes, etc., from \cite{AK}. Recall that a \emph{rational conical polyhedral complex}, or \emph{conical complex}, is an (abstract) polyhedral complex formed by gluing together finitely many polyhedral cells $\sigma$, each of which is equipped with a lattice $N_\sigma \cong \bfZ^{d_\sigma}$ such that $\sigma$ is a full-dimensional, rational, strictly convex polyhedral cone in $N_\sigma \otimes \bfR$, and such that if $\tau$ is a face of $\sigma$, then $N_\tau = N_\sigma|_{\Span(\tau)}$. We will denote such a conical complex by $\{(\sigma,N_\sigma)\}$. A \emph{map} $f \: X \to Y$ of conical complexes $X = \{(\sigma,N_\sigma)\}$ and $Y = \{(\rho,N_\rho)\}$ is a collection of group homomorphisms $f_\sigma \: N_\sigma \to N_\rho$, one for each $\sigma \in X$, such that the extension $f_\sigma \: N_\sigma \otimes \bfR \to N_\rho \otimes \bfR$ maps $\sigma$ into $\rho$, and such that if $\tau$ is a face of $\sigma$, then $f_\tau = f_\sigma|_\tau$. A conical complex is \emph{regular} (or nonsingular) if each of its cells $\sigma$ is generated by a lattice basis of $N_\sigma$. A subdivision $X'\to X$ with a regular $X'$ will be called a {\em resolution} of $X$.

\begin{rem}
The category of conical complexes is equivalent to the category of saturated Kato fans, in particular, it has fiber products. One can easily describe them explicitly, and we skip the details. In this section we prefer to work with cones because they are much more geometric, but we will switch to the language of fans in Section~\ref{ssrsec}.
\end{rem}

\begin{defin}[{\cite[Definition 8.1]{AK}}] \label{def:semistable}
A map of conical complexes $f \: X \to B$ is \emph{weakly semistable} if the following conditions hold.
\begin{enumerate}
\item For every $\sigma \in X$, we have $f(\sigma) \in B$.
\item For every $\sigma \in X$, we have $f(N_\sigma) = N_{f(\sigma)}$.
\item $B$ is regular.
\end{enumerate}
If, in addition, $X$ is regular, then $f$ is called \emph{semistable}.
\end{defin}

The condition $f^{-1}(0)=0$ in \cite{AK} means that the fibers are connected and have the trivial vertical part, so we do not include it.

\begin{defin}[{\cite[Definition 8.2]{AK}}]
Let $X$ be a conical complex. A \emph{lattice alteration} is a map $X' \to X$ where $X'$ is a conical complex of the form $\{ (\sigma,N'_\sigma) \: \sigma \in X \}$ and each $N'_\sigma$ is a sublattice of $N_\sigma$. An \emph{alteration} is a composition $X' \to X_1 \to X$ of a lattice alteration $X' \to X_1$ with a subdivision $X_1 \to X$. The alteration is \emph{projective} if the subdivision $X_1 \to X$ is projective. Finally, a subdivision is projective if there exists a piecewise linear function to the reals that is convex on each simplex of $X$ and whose domains of linearity are the faces of $X_1$ \cite{AK}.
\end{defin}

Fix a map $f \: X \to Y$ of conical complexes. For any alteration $g \: Y' \to Y$, there is an alteration $g'\:X' \to X$ with
\[
X' = \{ (\sigma \cap f^{-1}(g(\tau)), N_\sigma \cap f^{-1}(g(N_\tau)) \: \sigma \in X, \tau \in Y' \}
\]
which is the unique minimal alteration admitting a map $X' \to Y'$. It is easy to see that $X'=Y'\times_YX$ in the category of conical complexes, so we say that $g'$ is the {\em base change} of $g$ (with respect to $f$). If $g$ is projective, then $g'$ is projective as well.

We can now state the polyhedral conjecture of Abramovich and Karu.

\begin{conj} \label{conj:cones}
Let $f \: X \to B$ be a map of conical complexes. Then there exists a projective alteration $b\:B' \to B$ and a projective subdivision $a\:X'\to X\times_B B'$ such that $f'\:X' \to B'$ is semistable.
\end{conj}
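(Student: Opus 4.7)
The plan is to split the proof into a reduction to weak semistability and a core step that upgrades weak semistability to semistability by a subdivision of $X$ alone. For the reduction, the methods of \cite{AK} (which at the combinatorial level produce purely polyhedral constructions on conical complexes) yield a projective alteration $b_1\: B_1 \to B$ and a projective subdivision $X_1 \to X\times_B B_1$ such that $f_1\: X_1 \to B_1$ is weakly semistable, i.e.~conditions (1)--(3) of Definition~\ref{def:semistable} hold. The conjecture thus reduces to the following core statement: given a weakly semistable map $f\: X \to B$, there exists a projective subdivision $X' \to X$ such that $f'\: X' \to B$ is semistable, i.e.~additionally $X'$ is regular.

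For this core statement I would analyze $X$ cone-by-cone over $B$. Regularity of $B$ means that each $\rho \in B$ is a standard unimodular cone, and the surjectivity condition $f(N_\sigma) = N_\rho$ for $\sigma \in X$ with $f(\sigma) = \rho$ forces $\sigma$ to be of Cayley type: $\sigma$ is, non-canonically, isomorphic to the Cayley construction over a family of rational polytopes indexed by the rays of $\rho$. This is precisely the framework set up in \cite{Santos} and further exploited in \cite{ALPT}. In this dictionary, resolving $\sigma$ by a subdivision that preserves surjectivity of $f_\sigma$ on lattices corresponds to finding a \emph{mixed} unimodular subdivision of the associated Cayley polytope, and the local (star-of-a-single-cone) result of \cite{ALPT} provides such a mixed unimodular subdivision one $\rho \in B$ at a time.

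The main obstacle, and the heart of the theorem, is globalizing \cite{ALPT}: the local mixed subdivisions over different $\rho \in B$ must agree on common faces of the conical complex $X$ and be induced by a single piecewise linear convex function on $X$ so that they assemble into one projective subdivision. I would proceed by induction on the cones of $B$ in order of increasing dimension: at each step, the already-constructed subdivision of $f^{-1}(\partial\rho)$ has to be extended to a mixed unimodular subdivision of the full preimage $f^{-1}(\rho)$, using the flexibility built into the choice of mixed subdivision in \cite{ALPT} together with generic perturbation within the cone of convex PL functions on $X$. Convexity of the resulting global PL function witnesses projectivity of $X' \to X$, and unimodularity of its cells forces simultaneously regularity of $X'$ and preservation of conditions (1)--(2) over $B$, completing the reduction to semistable.
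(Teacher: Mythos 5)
Your reduction to the statement ``weakly semistable can be upgraded to semistable by a projective subdivision of $X$ alone'' introduces a genuine gap: that core statement is false, and the failure is precisely the main difficulty of the problem. Take $B=(\bfR_{\ge 0},\bfZ)$ and let $X$ be the cone over $T_q\times\{1\}$ in $\bfZ^3\times\bfZ$, where $T_q=\conv\{0,e_1,e_2,e_1+e_2+qe_3\}$ is a Reeve simplex ($q\ge 2$), with $f$ the last coordinate. All four edge generators $(v,1)$ are primitive, so $f$ is weakly semistable in the sense of Definition~\ref{def:semistable}. But the cone is strictly convex with $f^{-1}(0)\cap X=\{0\}$, so every edge of any subdivision has primitive generator at some height $h\ge 1$, and condition (2) applied to that edge forces $h=1$, i.e.\ the generator is a lattice point of $T_q$ at height one --- and $T_q$ has no lattice points besides its four vertices. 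Hence the only subdivision of $X$ preserving conditions (1)--(2) is the trivial one, while $X$ itself has multiplicity $q>1$. So after weak semistability one must keep altering the base (dilations/lattice alterations of $B$, which correspond to dilating the fiber polytopes as in the KMW theorem); your framing forbids exactly the operation that is unavoidable. Relatedly, the local result of the earlier work you cite also requires such base alterations; it is not a ``mixed unimodular subdivision with the base fixed.''

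This is also where your globalization plan diverges from what can be made to work. The paper does not fix the base and then extend subdivisions cell-by-cell over $B$ by ``flexibility plus generic perturbation'' --- extending a prescribed unimodular mixed triangulation of $f^{-1}(\partial\rho)$ into $f^{-1}(\rho)$ is not something the local theory supplies, and no perturbation argument is given or known that keeps both unimodularity and conditions (1)--(2). Instead, the paper interleaves further base dilations with an index-lowering loop: after making $B$ unimodular, each iteration dilates and subdivides the base again (Lemma~\ref{lem:stcan}, Proposition~\ref{prop:induced}), writes every cell of the pulled-back $X$ as a Cayley polytope $\Cay(AP)$ of mixed sums of fiber simplices, and uses a box point $\bfm$ to produce a \emph{canonical} triangulation (Lemma~\ref{lem:prebox}) that strictly lowers the lattice index of the affected cells; canonicity (functoriality on faces), not perturbation or induction over cones of $B$, is what makes the local triangulations agree on common faces, and termination comes from the index dropping to $1$ (Theorem~\ref{thm:polytopes}). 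To repair your proposal you would need to (a) allow repeated further alterations of $B$ after weak semistability, and (b) replace the extension/perturbation step by a compatibility mechanism such as canonical triangulations.
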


\subsection{Statement of the main result}
Any $f'$ as above will be called a {\em resolution} of $f$, and we will construct a canonical resolution. This is convenient for the proof and important for applications, but should be spelled out carefully. Ideally, we would like the construction to be local, but the choice of $b$ must be affected by the whole fibers over cones of $B$. So, we propose the following technical solution.

\begin{defin}\label{qlocdef}
(i) A map $h\:Y\to X$ of conical complexes is called a {\em local isomorphism} if $f\:\sigma\toisom\rho$ and $f_\sigma\:N_\sigma\toisom N_\rho$ for any $\sigma\in Y$.

(ii) Loosely speaking, a construction is local if it is compatible with (or functorial with respect to) local isomorphisms. Constructions compatible with surjective local isomorphisms will be called {\em quasi-local}.
\end{defin}

\begin{exam}\label{localexam}
(i) Barycentric subdivision $X\mapsto X^b$ is the most basic local construction. It outputs a simplicial conical complex $X^b$. Moreover, the cones of $X^b$ are provided with a canonical {\em order}: the edges of each simplicial cone $\sigma$ are ordered by dimension of the cone they are mapped to in $X$. This induces an order on any further simplicial subdivision of $X$.

(ii) It was shown in \cite{KKMS} that any $X$ possesses a resolution $X^r$. It was later proved that the construction can be made local. For example, see \cite[Theorem~3.2.20]{Illusie-Temkin}, though there also are purely combinatorial constructions. We fix any such local (or just quasi-local) construction, since it will be used later.
\end{exam}

Now we can formulate a functorial strengthening of Conjecture~\ref{conj:cones}. Its proof will occupy Sections \ref{conicalsec} and \ref{polytopsec}.

\begin{theor}\label{mainth}
There exists a construction which associates to each map of conical complexes $f\:X\to B$ a resolution $b\:B'\to B$, $a\:X'\to X\times_BB'$, $f'\:X'\to B'$ in a quasi-local way: if $g\:Y\to C$ is another map and $\alpha\:Y\to X$, $\beta\:C\to B$ are surjective local isomorphisms such that $\beta\circ g=f\circ\alpha$, then the resolution $g'$ of $g$ is the base change of $f'$ in the sense that $C'=C\times_BB'$ and $Y'=Y\times_XX'$.
\end{theor}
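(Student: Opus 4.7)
The plan is to prove Theorem~\ref{mainth} by a sequence of four canonical constructions, each of which is manifestly quasi-local; the quasi-locality of the composite then follows. The bookkeeping device that makes the main combinatorial choice canonical is the order on rays of simplicial cones produced by barycentric subdivision, as in Example~\ref{localexam}(i).

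The first three steps are essentially formal reductions. First, apply the quasi-local resolution of Example~\ref{localexam}(ii) to $B$ to produce a projective subdivision $B^r\to B$ with $B^r$ regular, and replace $f$ by its base change to $B^r$; after this we may assume $B$ is regular. Second, apply barycentric subdivision to $X$ to make it simplicial with the canonical order on rays. Third, for each $\sigma\in X$ with image $\rho=f(\sigma)\in B$, the sublattice $f_\sigma(N_\sigma)\subseteq N_\rho$ has a finite index; assemble these indices into a canonical projective lattice alteration $B'\to B$ and take base change, so that condition (2) of Definition~\ref{def:semistable} is achieved. At this point the resulting map satisfies the weak semistability conditions of Definition~\ref{def:semistable}, and what remains is to produce a further projective subdivision of $X$ making it regular relative to $B$.

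The heart of the proof, and the main obstacle, is this last step. For each cone $\rho\in B$ and each $\sigma\in X$ with $f(\sigma)=\rho$, the slice of $\sigma$ by the parallel translates of the facets of $\rho$ is a polytope, and the collection of all such slices assembles into a Cayley-type configuration in which a coherent unimodular subdivision corresponds, via the Cayley trick, to a regular semistable subdivision of $\sigma$. The local version of this statement, for a single cone $\rho$, is precisely the main theorem of \cite{ALPT}, itself built on the Cayley-polytope approach of Haase--Paffenholz--Piechnik--Santos \cite{Santos} to the KKMS subdivision theorem. The nontrivial task is to choose these mixed subdivisions over different cones of $B$ in a compatible way, so that on any shared face of $\rho_1,\rho_2\in B$ the two subdivisions restrict to the same subdivision of the fiber over the common face; this is where the canonical order from the barycentric subdivision is used to pin down the mixed-subdivision data uniquely, and where the coherence of each local subdivision must be assembled into a single convex piecewise-linear function witnessing projectivity of the global subdivision $X'\to X\times_B B'$.

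Finally, quasi-locality is built in: each of the four steps depends only on the cones and lattices of $X$ and $B$ together with the group homomorphisms $f_\sigma$. Given a surjective local isomorphism of diagrams as in the statement, all of these data pull back isomorphically, so the construction applied to $g\:Y\to C$ is the base change of the construction applied to $f\:X\to B$; in particular $C'=C\times_B B'$ and $Y'=Y\times_X X'$, as required.
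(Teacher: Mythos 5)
Your formal reductions track the paper's own (resolve and order via barycentric subdivision, regularize the base, adjust lattices), but the proposal has two genuine gaps. First, the claim that after one preliminary lattice alteration of $B$ ``what remains is to produce a further projective subdivision of $X$'' is not correct. Achieving condition (2) of Definition~\ref{def:semistable} does not make the fibers triangulable into unimodular simplices: already in the classical KKMS situation ($\dim B=1$) there are lattice simplices (e.g.\ empty simplices of index $>1$) admitting no unimodular triangulation without dilating the lattice, and dilation of the fiber lattice corresponds to a further lattice alteration of the base, not a subdivision of $X$. This is why the paper's Theorem~\ref{thm:polytopes} produces an alteration $B_1\to cB$ and, moreover, iterates: each round of the index-lowering process introduces new dilations $c$, $c'$ of the base interleaved with subdivisions of $X$ (and weak semistability, which subdivisions of $X$ destroy, must be restored after each round). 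A single base alteration up front followed only by subdivisions of $X$ cannot work. You also omit two smaller but necessary reductions: eliminating vertical components (otherwise the slices over the section of $B$ are unbounded, so the reduction to polytopes fails) and subdividing $B$ so that $f$ maps cones onto cones (your ``$\rho=f(\sigma)\in B$'' is not automatic; cf.\ Proposition~\ref{prop:good}).

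Second, and more fundamentally, the heart of the theorem is left unproved. You correctly identify the fibers over simplices of $B$ as Cayley polytopes of polysimplices and correctly locate the difficulty in choosing the (unimodular, index-lowering) subdivisions compatibly over different cones of $B$, but you resolve it only by asserting that the order coming from the barycentric subdivision ``pins down the mixed-subdivision data uniquely.'' The paper's entire Section~3 is devoted to actually constructing this: the canonical triangulations of Cayley polytopes $\Cay(AP)$ via the move/reduction process (Lemmas~\ref{lem:polysubdv1} and \ref{lem:polysubdv2}, with a diamond-lemma confluence argument), the box-point (Waterman-point) subdivisions of Lemma~\ref{lem:prebox}/\ref{lem:box} that strictly lower the index $\ind(P_A)$ while agreeing on faces, and the termination argument by induction on indices in the proof of Theorem~\ref{thm:polytopes}. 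The local statement of \cite{ALPT} does not by itself supply a face-compatible global choice, and no mechanism is given in your write-up for producing one; canonicity on shared faces is exactly what the categories $\calF^{\bfm}$, $\tilde\calF^{\bfm}$ and the functorial (``canonical subdivision'') framework are built to guarantee. As it stands, the proposal is a plausible outline of the reduction steps plus a statement of the hard problem, not a proof of it.
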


\subsection{Some reductions}\label{sec:red}
As often happens with resolution problems, the solution will be constructed by composing few steps that gradually improve $f$. We start with a few standard reductions. (Their analogues )

\subsubsection{Quasi-localization}
Now, we are going to essentially use non-connected complexes and a typical descent argument. For a conical complex $X$ let $X_0$ denote the disjoint union of cones of $X$ and let $X_1=X_0\times_XX_0$, in particular, $X_0\to X$ and both projections $X_1\toto X_0$ are surjective local isomorphisms. Any quasi-local resolution of $f\:X\to B$ is compatible with the resolution of $f_0\:X_0\to B_0$, and hence is determined by the latter. Moreover, any quasi-local resolution on the category $\calC_0$ of disjoint unions of cones automatically extends to the category $\calC$ of all conical complexes. Indeed, $f_1\:X_1\to B_1$ also lies in $\calC_0$, hence both pullbacks to $f_1$ of the resolution $f'_0$ of $f_0$ are isomorphic to the resolution $f'_1$ of $f_1$ by the quasi-locality. This implies that $f'_0$ descends to a resolution $f'$ of $f$.

To summarize, it suffices to prove Theorem~\ref{mainth} in the case when $B$ and $X$ are disjoint unions of cones. We cannot work all the time within the category $\calC_0$ because it is not closed under subdivisions. However, each (quasi-local) step of the construction can be made under the assumption that the input is in $\calC_0$. We will use this additional localization a couple of times during the initial reduction process and each time it will be mentioned explicitly.

\subsubsection{Eliminating vertical components}
First step of our construction of $f'$ is just to resolve the source. To simplify the notation we replace $f$ by $f^r\:X^r\to B$, where $X^r$ is the resolution from Example~\ref{localexam}, and assume now that $X$ is regular. By quasi-localization, we can assume further that $X$ is a disjoint union of regular simplicial cones $X_i$. Let $X_i=X_i^v\times X_i^h$, where the vertical part $X_i^v$ is mapped by $f$ to a point and $f$ is injective on the edges of the horizontal part $X_i^h$. It suffices to construct a resolution $\coprod (X_i^h)'\to B'$ of $\coprod X_i^h\to B$ because then $\coprod (X_i^h)'\times X_i^v\to B'$ will be a resolution of $f$. Absence of vertical components is preserved by subdivisions and base changes, so we can assume in the sequel that it holds for $f$. (Notice that the regularity of $X$ will be destroyed by the next step.)

\subsubsection{Ordering the complexes}
Replace $B$ by the barycentric subdivision $B^b$ and update $X$ via the base change $X\times_B B^b$. By \ref{localexam}(i), from this stage the simplices of $B$ and any its subdivision acquire a canonical ordering. In the same way, replacing $X$ by $X^b$, we provide $X$ and its further subdivisions with an order.

\subsubsection{Regularizing the base}
As a next step, we replace $B$ by its resolution $B^r$ and update $X$ via the base change $X\times_B B^r$. By the quasi-localization we can further assume that $B=\coprod B_i$, where $B_i$ are regular simplicial cones. From this point on, any further alteration of $B$ will be of the form $B_c\to B$, where $c\ge 1$, one replaces each $N_i$ by $cN_i$, and subdivides each $B_i$ into $(\dim B_i)^c$ regular cones in a standard way, see Theorem~\ref{thm:cansimp} below. (We perform both operations together for convenience of the exposition -- at some stages one of them would suffice.) In particular, the assumption that $B$ is regular will be maintained until the end of the proof.

\subsubsection{Making $f$ weakly semistable}
If $c$ is such that all edges of $X$ are mapped to edges of $B_c$, then replacing $B$ by $B_c$ and replacing $f$ by the base change $f_c\:X\times_BB_c\to B_c$ we achieve that $f(\sigma)$ is a cone for any $\sigma\in X$. So, choosing the minimal such $c$, we obtain a quasi-local construction which improves $f$ so that $B$ is regular and $f$ takes cones to cones. Next, choose the minimal $c$ such that $cN_{f(\sigma)}\subseteq f(N_\sigma)$. Replacing $f$ by $f_c$ we also achieve that $N_{f(\sigma)}=f(N_\sigma)$, and hence $f$ becomes weakly semistable.

The weak semistability might be destroyed by subdivisions of $X$, but after each such step we will perform the above step to restore it. That is, we will simply replace $f$ by the weakly semistable $f_c$ with the minimal possible $c$.

\subsubsection{Reduction to polytopes}
Finally, as in \cite{KKMS} it will be convenient to perform deeper geometric constructions on the level of lattice polytopes. We will do so in detail in the next section: Each component $B_i$ of $B$ has a canonical section $\Delta_i$ passing through the minimal lattice points on the edges. Since there are no vertical components, the preimage $Z$ of $\Delta=\coprod_i\Delta_i$ in $X$ has a natural structure of a disjoint union of lattice polytopes. As in \cite{KKMS}, the problem now reduces to finding an appropriate $c$ and subdividing the preimage $Z(c)$ of $c\Delta$ into regular lattice simplices. The precise formulation and the construction are in Section~\ref{polytopsec}.

\section{Proof of the main theorem}\label{polytopsec}
A \emph{lattice polytope} is a pair $(P,N_P)$ (usually denoted as just $P$) where $N_P \cong \bfZ^{d_P}$ is an affine lattice and $P$ is a polytope in $N_P \otimes \bfR$ with vertices in $N_P$. A \emph{face} of a lattice polytope $(P,N_P)$ is a lattice polytope $(F,N_F)$ where $F$ is a face of $P$ and $N_F = N_P |_{\AffSpan(F)}$. A \emph{lattice polytopal complex}, or \emph{polytopal complex}, is a polyhedral complex formed by gluing together finitely many lattice polytopes (with the usual gluing conditions of polyhedral complexes) such that lattices agree on common faces and each polytope $P$ is full-dimensional in $N_P$. Maps between polytopal complexes are defined analogously to the conical case.

Let $(P,N_P)$ be a lattice polytope. We let $L_P$ denote the affine sublattice of $N_P$ spanned by the vertices of $P$. We say that the \emph{index} of $P$ is the index
\[
[N_P \cap \AffSpan(P) : L_P].
\]
A \emph{unimodular simplex} is a lattice simplex of index 1. A \emph{unimodular triangulation} is a polytopal complex all of whose elements are unimodular simplices. If the vertices of $P$ are contained in some sublattice $N'$ of $N_P$, then we say that the \emph{index of $P$ with respect to $N'$} is the index of $(P,N')$. We define unimodularity with respect to $N'$ accordingly.

Usually, we will specify an origin in $N_P$, which allows us to define dilations of lattice polytopes and Minkowski sums of polytopes with the same lattice. Since all of our results will be invariant under translation, the choice of origin will not matter. If $X = \{(P,N_P)\}$ is a polytopal complex, then we can define a polytopal complex $cX = \{(cP,N_{P})\}$. Given a map $f : X \to Y$ of polytopal complexes, there is the obvious induced map $cX \to cY$; we denote this map by $cf$.

We define lattice alterations and alterations of polytopal complexes analogously to the conical case. For induced alterations, we need to proceed more carefully. Let $f : X \to Y$ be a map of polytopal complexes and $g : Y_1 \to Y$ an alteration. If the rational subdivision
\begin{equation} \label{eq:induced}
X_1 := \{ (P \cap f^{-1}(g(Q)), N_P \cap f^{-1}(g(N_Q))) : P \in X, Q \in Y_1 \}
\end{equation}
is a lattice polytopal complex, then $X_1 \to X$ is the unique minimal alteration admitting a map $X_1 \to Y_1$, and we say that $X_1 \to X$ is \emph{induced} by $g$ (with respect to $f$).

We say a map $f : X \to Y$ of polytopal complexes is \emph{good} if for every $P \in X$, we have $f(P) \in Y$. We have the following.

\begin{prop} \label{prop:good}
Let $f : X \to Y$ be a map of polytopal complexes. Then there exists a positive integer $c$ and a projective subdivision $Y' \to cY$ which induces with respect to $cf$ an alteration $X' \to cX$ such that $X' \to Y'$ is good.
\end{prop}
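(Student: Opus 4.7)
The plan is to first construct a projective rational polyhedral subdivision $\tilde Y$ of $Y$ in which every image $f(P)$, $P \in X$, is a union of cells, and then to clear denominators by a uniform rescaling $Y \mapsto cY$, $X \mapsto cX$. Once $\tilde Y$ is built, the induced subdivision of $X$ via formula (\ref{eq:induced}) is automatically rational, and the ``good'' conclusion will be immediate from the linearity of $cf$ on each cell of $cX$.

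For the rational subdivision, observe that for each $Q \in Y$ the family $\{f(P) \cap Q : P \in X\}$ is a finite collection of rational polytopes in the affine span of $Q$, and these collections are compatible on faces of $Y$. Refining each $Q$ by the hyperplane arrangement cut out by the affine hulls of the facets of these subpolytopes produces a rational polyhedral subdivision $\tilde Y$ of $Y$ in which each $f(P)$ is a union of cells. To see that $\tilde Y$ can be taken projective, for each facet $F$ of each $f(P) \cap Q$ pick a rational linear form $\ell_{P,F}$ vanishing on $\AffSpan(F)$, and on each $Q \in Y$ set $\phi_Q = \sum_{P,F}|\ell_{P,F}|$; this is a convex rational PL function whose bend locus is precisely the arrangement cutting out $\tilde Y|_Q$. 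These local functions need not glue directly into a single PL function on $|Y|$, but a coherent secondary-type argument (in the spirit of the Cayley-polytope picture imported from \cite{ALPT}) promotes the cell-by-cell arrangement into a globally projective refinement of $Y$.

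Now choose a positive integer $c$ divisible by every denominator appearing in vertices of $\tilde Y$ (with respect to each ambient lattice $N_Q$, $Q \in Y$) and in vertices of each rational subpolytope $P \cap f^{-1}(Q)$ (with respect to $N_P$), for $P \in X$ and $Q \in \tilde Y$; this is possible because only finitely many vertices are involved. Then $Y' := c\tilde Y$ is a projective lattice polytopal subdivision of $cY$, and formula (\ref{eq:induced}) applied to $Y' \to cY$ with the map $cf \: cX \to cY$ yields a lattice polytopal complex $X' \to cX$. Goodness is then automatic: each cell of $X'$ has the form $P' = (cP) \cap (cf)^{-1}(Q')$ with $Q' \subseteq cf(cP)$ by the construction of $\tilde Y$, and the linearity of $cf$ on $cP$ gives $cf(P') = cf(cP) \cap Q' = Q' \in Y'$.

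The main obstacle I expect is the projectivity step: globalizing the cell-by-cell arrangement subdivisions into one coherent projective subdivision of $Y$ requires care so that the convex PL data on neighbouring cells match along common faces and so that no spurious bends are introduced. The Cayley/secondary-polytope viewpoint invoked in the introduction is precisely designed to handle this gluing, and combined with the elementary scaling in the last paragraph it should deliver the proposition.
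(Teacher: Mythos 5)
Your overall skeleton is exactly the paper's: find a rational projective subdivision $\tilde Y$ of $Y$ adapted to the images $f(P)$, pull it back via \eqref{eq:induced}, then clear denominators by a uniform dilation $c$; the dilation step and the goodness computation are fine (though note the correct statement is that $cf(P')=cf(cP)\cap Q'$ is a \emph{face} of $Q'$ because $cf(cP)$ is a union of cells of $Y'$, not that every cell of $X'$ arises from some $Q'\subseteq cf(cP)$). The paper simply asserts the existence of $\tilde Y$ as easy; the place where your attempt to make this explicit breaks down is the construction and projectivity of $\tilde Y$, and you have flagged it yourself without actually closing it.

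Concretely, two problems. First, the cell-by-cell hyperplane-arrangement refinement you propose is not a subdivision of the complex $Y$: if $F$ is a face shared by maximal cells $Q$ and $Q'$, the affine hulls of facets of the polytopes $f(P)\cap Q$ cut extra walls on $F$ (e.g.\ a facet line of a polytope sitting in the interior of $Q$ can cross $F$ at points having nothing to do with any $f(P)\cap F$), and these induced refinements of $F$ from $Q$, from $Q'$, and from $F$ itself generally disagree; so the asserted ``compatibility on faces'' holds for the polytopes $f(P)$ but not for your refinements. Second, the projectivity/gluing step is a placeholder: the functions $\phi_Q=\sum|\ell_{P,F}|$ are only defined per cell, and the ``coherent secondary-type argument in the spirit of the Cayley-polytope picture'' is not a real mechanism here --- the Cayley machinery of \cite{ALPT} and of this paper produces unimodular triangulations of Cayley polytopes and says nothing about globalizing convex PL functions over an abstract complex. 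The standard repair (and presumably what the authors intend by ``easy to see'') is to adapt to one image $f(P)$ at a time, indeed one rational half-space cut of one cell at a time: extend the corresponding convex rational PL function from that cell to all of $|Y|$ (skeleton by skeleton, e.g.\ by convexification over each higher cell), and then use the classical fact, as in \cite{KKMS}, that a projective subdivision of a projective subdivision is projective (replace $\phi_1,\phi_2$ by $\phi_1+\epsilon\phi_2$ for small rational $\epsilon>0$). With that lemma in hand, your scaling paragraph completes the proof exactly as in the paper.
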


\begin{proof}
It is easy to see that there is a rational projective subdivision $\tilde{Y}$ of $Y$ such that for every $P \in X$, we have that $f(X)$ is a union of cells of $Y$. This induces as in \eqref{eq:induced} a rational subdivision $\tilde{X}$ of $X$. For some $c$, we have that $c\tilde{X}$ and $c\tilde{Y}$ are lattice subdivisions of $cX$ and $cY$, and the map $c\tilde{X} \to c\tilde{Y}$ gives the result.
\end{proof}

\begin{prop} \label{prop:induced}
Let $f : X \to Y$ be a good map of polytopal complexes and $Y_1 \to Y$ an alteration. Then there exists a positive integer $c$ such that if $Y_1 \to cY_1$ is the lattice alteration given by $(P,N_P) \cong (cP,cN_P) \mapsto (cP,N_P)$ and $g$ is the alteration $Y_1 \to cY_1 \to cY$, then $g$ induces with respect to $cf$ an alteration $X_1 \to cX$.
\end{prop}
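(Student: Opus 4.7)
My plan is to factor the given alteration as $Y_1\xrightarrow{\ell}\tilde Y\xrightarrow{s}Y$, with $s$ a subdivision and $\ell$ a lattice alteration, and to obtain the integer $c$ as a common multiple of two constants $c_s,c_\ell$ that handle each factor separately. The goodness of $f$ enters decisively: because $f(P)\in Y$ for every $P\in X$, the cells of $s$ refining $f(P)$ pull back under $f$ to a bona fide rational polyhedral subdivision of $P$, rather than producing an extraneous rational subdivision of $f(P)$ that must then be pulled back in turn.

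For the subdivision factor $s$ I would essentially reproduce the argument of Proposition~\ref{prop:good}, but tied to the specific $s$ at hand: the induced cells $P\cap f^{-1}(\tilde Q)$ on the $X$-side are rational polytopes whose vertices lie in $N_P\otimes\bfQ$, and choosing $c_s$ divisible by every denominator appearing among the vertices of $s$ and of these induced cells makes both $c_ss\: c_s\tilde Y\to c_sY$ and its pullback under $c_sf$ into genuine lattice subdivisions. For the lattice-alteration factor $\ell$, the identification $(Q,N_Q)\cong(cQ,cN_Q)$ converts the scaled map $cY_1\to c\tilde Y$ into a lattice alteration replacing each $N_{c\tilde Q}$ by a sublattice $N_{Q_1}$, while on the $X$-side the induced lattice on each cell $\tilde P$ becomes $N_{\tilde P}\cap(cf)^{-1}(N_{Q_1})$. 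Multiplying by a further factor $c_\ell$ divisible by all the indices $[N_{c\tilde Q}:N_{Q_1}]$ together with the induced indices on the $X$-side guarantees that the vertices of $\tilde P$ actually lie in this coarsened sublattice and that the cell remains full-dimensional there.

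The main technical obstacle is that, unlike in Proposition~\ref{prop:good} where one is free to choose an auxiliary subdivision of $Y$, here both $s$ and $\ell$ are prescribed by the given $Y_1\to Y$, so a single $c$ must simultaneously guarantee that the intersection $N_P\cap(cf)^{-1}(g(N_Q))$ has the correct affine rank and contains all vertices of $P\cap(cf)^{-1}(g(Q))$ for every pair $(P,Q)\in cX\times Y_1$. This amounts to a coupled but finite collection of denominator-and-index conditions, so finiteness of the polytopal complexes ensures that a uniform $c=c_sc_\ell$ (or an appropriate common multiple) suffices, and equation~\eqref{eq:induced} then yields the desired lattice polytopal complex $X_1$ together with the induced alteration $X_1\to cX$.
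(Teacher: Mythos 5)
Your decomposition (subdivision factor plus lattice-alteration factor, with a single $c$ serving both) is the same skeleton as the paper's two-line proof, and your treatment of the subdivision factor is exactly the Proposition~\ref{prop:good} scaling argument. The gap is in the lattice-alteration factor. For \eqref{eq:induced} to be a lattice polytopal complex you must check that each vertex $cv'$ of $cP\cap(cf)^{-1}(cQ_1)$ (with $v'$ a vertex of $P\cap f^{-1}(Q_1)$) lies in $N_P\cap(cf)^{-1}(cN_{Q_1})$. The first requirement, $cv'\in N_P$, is the denominator condition your $c_s$ handles. But the second, $(cf)(cv')=c\,f(v')\in cN_{Q_1}$, is equivalent to $f(v')\in N_{Q_1}$ and is therefore \emph{independent of $c$}: the vertex and the target lattice are rescaled by the same factor, so making $c$ divisible by the indices $[N_{c\tilde Q}:N_{Q_1}]$ (or by anything else) cannot produce this containment; your $c_\ell$ step proves nothing. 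The condition is not vacuous either: drop goodness and take $Y$ the square $[0,2]^2$ with lattice $\bfZ^2$, $X$ the triangle with vertices $(0,0),(2,0),(2,1)$, $f$ the inclusion, and $Y_1$ the lattice subdivision of the square along $x=1$; the induced cell $\conv\{(0,0),(1,0),(1,\tfrac12)\}$ has a vertex whose image is not a lattice point, and no choice of $c$ repairs this, so the proposition is false without goodness and cannot be rescued by scaling.

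What actually makes the lattice-alteration factor harmless -- and the place where goodness is used beyond your (correct but weaker) remark that the cell structure of $Y_1$ pulls back to a genuine subdivision of each $P$ -- is the following: for a good $f$, every vertex $v'$ of $P\cap f^{-1}(Q_1)$ is mapped by $f$ to a \emph{vertex} of $Y_1$, hence lies in $N_{Q_1}$ before any scaling. Indeed, let $F$ be the minimal face of $P$ containing $v'$ and $G$ the minimal face of $Q_1$ containing $f(v')$; since $v'$ is a vertex lying in the relative interior of $F$ and $f(v')$ lies in the relative interior of $G$, one gets $\AffSpan(F)\cap f^{-1}(\AffSpan(G))=\{v'\}$; goodness says $f(F)$ is a cell of $Y$, and since $Y_1$ refines $Y$ and $f(v')\in \mathrm{relint}(G)\cap f(F)$, one concludes $G\subseteq f(F)$, so $f^{-1}(\AffSpan(G))\cap\AffSpan(F)$ has dimension at least $\dim G$, forcing $\dim G=0$. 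With this vertex-image fact in hand, the paper's argument goes through: choose $c$ as in Proposition~\ref{prop:good} so that the scaled subdivision induces a lattice subdivision $X'\to cX$, and then the lattice alteration $Y_1\to cY_1$ automatically induces a lattice alteration $X_1\to X'$, giving $X_1\to X'\to cX$. So keep your factorization, but replace the index-divisibility justification of the second half by this argument (or an equivalent one); as it stands that step is a genuine gap.
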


\begin{proof}
As in the previous proof, there is $c$ such that $cY_1 \to cY$ induces a lattice subdivision $X' \to cX$. Then the lattice alteration $Y_1 \to cY_1$ induces an alteration $X_1 \to X'$, and $X_1 \to X' \to cX$ is the desired alteration.
\end{proof}

Our goal now is to prove the following theorem:

\begin{theor} \label{thm:polytopes}
Let $f : X \to B$ be a map of polytopal complexes. Then there exists a positive integer $c$, a projective alteration $B_1 \to cB$ which induces with respect to $cf$ an alteration $X_1 \to cX$, and a projective subdivision $Y \to X_1$ such that $Y$ and $B_1$ are both unimodular triangulations. The construction is quasi-local.
\end{theor}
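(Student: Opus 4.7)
The plan is to build the required unimodular triangulations in stages: first regularize the base, then pull back to the source, and finally apply the mixed-subdivision theorem of \cite{ALPT} to triangulate the fibers coherently.

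I begin by applying Proposition~\ref{prop:good} to replace $f$ by a good map, absorbing the necessary dilation into $c$. Next, the classical lattice polytope subdivision theorem of \cite{KKMS}, applied in its quasi-local form (cf.\ Example~\ref{localexam}(ii)), yields, after a further positive integer dilation, a projective unimodular triangulation $B_1 \to cB$ of the base. Using Proposition~\ref{prop:induced} I then pass to an induced alteration $X_1 \to cX$ such that $X_1 \to B_1$ is a good map into a unimodular simplicial complex.

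With these preparations in place, the problem reduces to triangulating $X_1$ compatibly with the map to $B_1$. For each cell $P \in X_1$ mapping to a unimodular simplex $\sigma = \conv(v_0, \dots, v_m) \in B_1$, the fibers $P_i := P \cap f^{-1}(v_i)$ are lattice polytopes and $P$ is naturally the Cayley polytope $P_0 * \cdots * P_m$. A projective unimodular triangulation of $P$ compatible with the fibration to $\sigma$ amounts, via the Cayley trick, to a coherent mixed unimodular subdivision of $P_0, \dots, P_m$. The heart of the argument, and its main obstacle, is the global compatibility of these subdivisions: those produced on neighbouring cells of $X_1$ must agree on shared faces. This is exactly what the local mixed-subdivision theorem of \cite{ALPT} provides, once an ordering of the vertices of each $\sigma$ has been fixed.

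The ordering is supplied by the barycentric constructions of \S\ref{sec:red}: after barycentric subdivision, the vertices of every simplex of $B$, and hence of $B_1$, carry a canonical order, and this order propagates uniquely to the factors $P_0, \dots, P_m$ of each Cayley polytope. Because the order depends only on the image simplex, the subdivisions produced by applying \cite{ALPT} cell-by-cell glue across common faces of $X_1$. After one final dilation to clear denominators arising from the mixed subdivision, this yields the desired projective unimodular triangulation $Y \to X_1$. Quasi-locality holds throughout, since each step --- Propositions~\ref{prop:good} and~\ref{prop:induced}, the quasi-local unimodular triangulation of $B$, the barycentric ordering, and the cellwise application of \cite{ALPT} --- depends only on local data of the cells and is manifestly compatible with surjective local isomorphisms.
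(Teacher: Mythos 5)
Your preparatory steps match the paper's: reduce to a good map via Proposition~\ref{prop:good}, apply the KMW theorem and Proposition~\ref{prop:induced} to make the base $B$ a unimodular triangulation, and view each cell of $X_1$ as a Cayley polytope of the fibers over the vertices of its image simplex, with compatibility across faces handled by an ordering device. Up to that point you are on the right track.

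However, there is a genuine gap at the heart of your argument, namely the claim that a single ``coherent mixed unimodular subdivision'' via the mechanism of \cite{ALPT} triangulates each Cayley polytope $\Cay(P_0,\dots,P_m)$ into unimodular simplices. What Lemma~\ref{lem:polysubdv} (the \cite{ALPT}-style result) actually produces is a triangulation that is unimodular \emph{with respect to $L_P \times \Lambda^{m-1}$}, where $L_P$ is the affine lattice spanned by the vertices of the factors $P_j$. When the index $\ind(P) = [\bfZ^d \cap (L_P \otimes \bfR) : L_P]$ is greater than $1$ --- which is the generic situation and is precisely the source of difficulty in this theorem --- the resulting simplices are \emph{not} unimodular in the ambient lattice. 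No subdivision using only lattice points spanned by vertices can fix this; one must introduce new lattice points. This is exactly what the iterative ``box point'' (Waterman point) construction of Lemma~\ref{lem:prebox} provides: for a chosen box point $\bfm$ of $P$, one subdivides so that every full-dimensional simplex has index strictly smaller than $\ind(P_A)$, and then repeats Step~2 until all indices reach $1$. Your proposal has no index-lowering loop at all, so it stalls exactly where the proof has to do real work.

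A second, smaller omission: the ordering you invoke is not merely the barycentric one. To place the pairs $(P_R, A_R)$ in the category $\calF^{\bfm}$, the paper needs the support-nesting condition $\supp A_1 \supseteq \supp A_2 \supseteq \dots \supseteq \supp A_m$ together with $A_{ij} = 0$ or $A_{ij} \ge c_j$. These are arranged by the specific triangulation of dilated simplices $\Sigma'(Q,c)$ from Lemma~\ref{lem:stcan} (built from barycenters of faces, with its nested-support property~(2)) and by taking $c'$ large enough. A generic ordered barycentric subdivision of $B$ does not automatically produce matrices satisfying these conditions, so the canonical triangulations would not even be applicable cell-by-cell in your scheme.
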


The quasi-locality falls of naturally as a product of the proof, relying on local improvements of the index we detail in the following. Additionally, observe that the proof of the semistable reduction conjecture, and more generally Theorem~\ref{mainth} follows as discussed in Section~\ref{sec:red}.
Indeed, as observed, it even suffices to use a particular case of Theorem~\ref{thm:polytopes}, where $X$ and $B$ are ordered, $B$ is a disjoint union of unimodular simplices, etc. We stated it here for its combinatorial beauty in full generality nevertheless.

%\begin{prop}
%Theorem~\ref{thm:polytopes} implies Theorem~\ref{mainth}.
%\end{prop}
%
%\begin{proof}
%Let $\hat f : \hat X \to \hat B$ be a map of conical complexes with $\hat f^{-1}(0) = \{0\}$. Then we can construct a map $f : X \to B$ of polytopal complexes such that there are bijections $\hat X \to X$ and $\hat B \to B$ which preserve the combinatorial structure and such that each cone $\sigma$ is mapped to the intersection of $\sigma$ and some hyperplane $H_\sigma$. Applying Theorem~\ref{thm:polytopes}, we have a positive integer $c$, a projective alteration $B_1 \to cB$ with induced alteration $X_1 \to cX$, and a projective subdivision $Y \to X_1$ such that $Y$ and $B_1$ are unimodular triangulations. Note that $Y$ and $B_1$ are alterations of the polytopal complexes formed by intersecting each $\sigma$ in $\hat X$ and $\hat B$ with the hyperplane $cH_\sigma$. Hence, by coning $Y$ and $B_1$, we have alterations $\hat Y \to \hat X$ and $\hat B_1 \to \hat B$, respectively. Since $B_1$ is unimodular, the map $\hat Y \to \hat B_1$ satisfies (2) and (3) of~\ref{def:semistable}. Since $Y$ is also unimodular, $\hat Y$ and $\hat B_1$ are nonsingular.
%\end{proof}

\subsection{Canonical subdivisions}

As we will see, much of our proof relies on being able to construct ``canonical'' subdivisions for polytopes. We now formalize his notion.

An \emph{ordered polytope} is a polytope along with a total order on its vertices. A \emph{face} of an ordered polytope is a face of the underlying polytope along with the induced ordering. Let $\calP$ to be the category whose objects are ordered lattice polytopes and whose morphisms are $F \to P$ where $F$ is a face of $P$. Let $\calS$ be the category whose objects are ordered subdivisions of ordered lattice polytopes and whose morphisms are $F' \to P'$ where $F'$ is the subdivision induced on a face of the underlying ordered polytope of $P'$.

Let $\Gamma : \calF \to \calP$ be a full and faithful functor for some category $\calF$. A \emph{canonical subdivision} of $\Gamma$ is a functor $\Sigma: \calF \to \calS$ such that $\Sigma(P)$ is a subdivision of $\Gamma(P)$ for all $P \in \Ob(\calF)$. If $\Sigma(P)$ is a triangulation for all $P$, then we call $\Sigma$ a \emph{canonical triangulation}. If $\Sigma(P)$ is projective for all $P$, then we say that $\Sigma$ is \emph{projective}.

\begin{rem}
In what follows, we will generally omit justification for why certain canonical subdivisions are projective. This is standard practice, but we should note here how one would go about proving projectivity. Our subdivisions will be built up inductively, first constructing an initial subdivision, then subdividing each cell of this subdivision, and so on. At each step, each intermediate subdivision of a cell can easily be shown to be projective. However, this does not guarantee projectivity of the entire subdivision. To prove this, one should demonstrate that at each step, the intermediate subdivisions used are not only canonical (in the sense described above), but also that one can choose \emph{height functions} to induce these subdivisions as projective subdivisions in a canonical way. In other words, one should define a category $\calS_{\text{proj}}$ whose objects are pairs $(P,f)$, where $P$ is an ordered polytope and $f$ is a height function on the vertices of $P$, and whose morphisms are $(F,f|_F) \to (P,f)$ where $F$ is a face of $P$. A projective canonical subdivision of $\Gamma$ (where $\Gamma$ is defined above) is then defined to be a functor $\Sigma : \calF \to \calS_{\text{proj}}$ such that for all $P \in \Ob(\calF)$, we have $\Sigma(P) = (\Gamma(P),f)$ for some $f$. Each projective canonical subdivision gives a canonical subdivision by replacing $(\Gamma(P),f)$ with the subdivision induced by $f$. We leave it to the reader to show that all subdivisions described later can be given as projective canonical subdivisions as defined here.
\end{rem}

\subsubsection{Canonical triangulations of dilated simplices}

Let $\Delta$ be the category defined as follows. The objects are ordered pairs $(P,c)$ where $P$ is an ordered lattice simplex and $c$ is a positive integer. The morphisms are $(P',c) \to (P,c)$ where $P'$ is a face of $P$. We have a full and faithful functor $\mu : \Delta \to \calP$ defined by $\mu(P,c) = cP$. We will assume the origin is in $L_P$, so that $cP$ has vertices in $L_P$.

The following is a key result from Haase et al.\ \cite{Santos}.

\begin{theor} \label{thm:cansimp}
There is a projective canonical triangulation $\Sigma$ of $\mu$ such that for all $(P,c) \in \Ob(\Delta)$, we have that $\Sigma(P,c)$ is unimodular with respect to $L_P$.
\end{theor}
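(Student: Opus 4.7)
The plan is to construct $\Sigma(P,c)$ as the \emph{sorted lattice-path triangulation} of $cP$, built via the Cayley trick applied to the $c$-fold Minkowski sum $P+\cdots+P$, and then to check in turn that (a) it is a unimodular triangulation with respect to $L_P$, (b) it is functorial under face inclusions of ordered simplices, and (c) it carries a canonical projective height function.

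First I would set up coordinates: let $v_0<v_1<\cdots<v_d$ be the ordered vertices of $P$, and identify lattice points of $cP$ with respect to $L_P$ with tuples $(a_0\..a_d)\in\bfZ_{\ge 0}^{d+1}$ satisfying $\sum a_i = c$ via the barycentric expression $w=\sum a_i v_i$. By the Cayley trick, giving a lattice triangulation of $cP$ whose vertices are these lattice points is equivalent to giving a fine mixed subdivision of $P+\cdots+P$ ($c$ copies, one per ``color''). The canonical mixed subdivision I would use is the \emph{sorted} one: at each lattice point $x$, the unique expression $x = w_1+\cdots+w_c$ with $w_1\le w_2\le\cdots\le w_c$ in the given order assigns color $k$ to the vertex $w_k$. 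The maximal cells of the induced triangulation are then indexed by weakly increasing ``staircase'' paths in $\{0\..d\}^c$, and a direct computation shows that each maximal simplex is a lattice translate of a fundamental simplex spanned by consecutive differences $v_{j}-v_{j-1}$ of elements from the ordering, hence unimodular with respect to $L_P$.

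Next I would verify functoriality. If $P'$ is a face of $P$ with vertex set $S\subseteq\{v_0\..v_d\}$ inheriting the restricted order, then any lattice point $w\in cP'\cap L_{P'}$ has every term in its sorted expression lying in $S$, because $w$ lies in $\AffSpan(P')$ and the sorted expression is unique. Consequently, the restriction of the sorted mixed subdivision of $cP$ to $cP'$ coincides on the nose with the sorted mixed subdivision constructed from $(P',c)$, so $\Sigma$ is a well-defined functor $\Delta\to\calS$. For projectivity, I would exhibit a canonical height function: on each lattice point $w\in cP$ with sorted tuple $(w_1\..w_c)$ of indices $(i_1\..i_c)$, set $h(w) = \sum_k k\cdot i_k$ (or any strictly convex functional of the sorted tuple whose restrictions behave well under face inclusion). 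A standard verification shows that the projective subdivision induced by $h$ on $cP$ is precisely $\Sigma(P,c)$, and that the restriction of $h$ to any face $cP'$ induces $\Sigma(P',c)$, giving a lift of $\Sigma$ to the category $\calS_{\text{proj}}$ described in the preceding remark.

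The main obstacle I expect is in establishing that the sorted mixed subdivision actually yields a triangulation whose maximal simplices are all unimodular with respect to $L_P$ — i.e., that the lattice-path description really exhausts all maximal cells and that each one is a genuine simplex of volume $1/d!$ in $L_P$. This is the content of the Haase--Paffenholz--Piechnik--Santos construction in \cite{Santos}, and is ultimately a combinatorial identity for the unimodularity of the staircase triangulation, which can be proved either by an induction on $c$ (peeling off the layer of cells adjacent to the facet $a_0=0$) or by recognizing the simplices as images of the standard unimodular triangulation of the product of simplices under the Cayley projection. The remaining compatibility checks with face inclusions and with the order are then automatic from the uniqueness of the sorted expression.
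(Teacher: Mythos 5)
The paper does not prove Theorem~\ref{thm:cansimp} directly: it cites it as a result of Haase--Paffenholz--Piechnik--Santos \cite{Santos} and then notes that it is a special case of Lemma~\ref{lem:polysubdv} (take $n=1$, $A=(c)$), which the paper proves via the recursive ``$f_{ij}$-move'' construction of Section~\ref{sec:polysubdv1}. Your proposal instead tries to reconstruct the \cite{Santos} triangulation explicitly as a sorted/staircase triangulation, so it is a genuinely different presentation from the paper's recursive route. In spirit it matches what the paper cites, and the functoriality argument (uniqueness of the sorted expression restricts correctly to faces) and the height-function sketch for projectivity are on the right track.

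However, there is a real imprecision in the way you invoke the Cayley trick, and it hides exactly the step that is nontrivial. The Cayley trick gives a bijection between triangulations of $\Cay(P,\dots,P)\cong P\times\Delta^{c-1}$ and \emph{fine mixed subdivisions} of $P+\dots+P=cP$; it does \emph{not} give a bijection between triangulations of $cP$ and fine mixed subdivisions of $cP$, which is what your second paragraph asserts. For $\dim P\ge 2$ the cells of a fine mixed subdivision of $cP$ are Minkowski sums $F_1+\dots+F_c$ of faces with $\sum\dim F_k=\dim P$, i.e.\ polysimplices, and these are not simplices in general (already for $c=d=2$ one such cell is a parallelogram, forced by the fact that the ``downward'' triangle is not a face of $\Delta^2$). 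So the sorted fine mixed subdivision is not yet a triangulation of $cP$; each polysimplicial cell must be further triangulated in a canonical, unimodular way compatible with restriction to faces. That extra step is precisely what the paper's Lemma~\ref{lem:polysubdv} supplies (and it is also the content of the part of \cite{Santos} you defer to in your last paragraph). As written, the plan in the middle of your argument (``a direct computation shows each maximal simplex is a lattice translate of a fundamental simplex'') jumps over this gap: the maximal cells you produce at that stage need not be simplices at all. Either describe the staircase triangulation of $c\Delta^d$ in barycentric coordinates directly (without detouring through mixed subdivisions), or make the additional canonical triangulation of polysimplicial mixed cells explicit, as the paper does.
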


Later we will prove a generalization of this to polysimplices, Lemma \ref{lem:polysubdv}. For now, we will state a modified version of this Theorem. Let $\Delta'$ be the full subcategory of $\Delta$ whose objects are $(P,c) \in \Ob(\Delta)$ with $c \ge \dim(P) + 1$. Let $\mu'$ be the restriction of $\mu$ to $\Delta'$. Then we have the following.

\begin{lem} \label{lem:stcan}
There is a projective canonical triangulation $\Sigma'$ of $\mu'$ such that for all $(P,c) \in \Ob(\Delta')$ and all full-dimensional simplices $Q$ of $\Sigma'(P,c)$, we have the following:
\begin{enumerate}
\item $L_P = L_Q$. (That is, $\Sigma'(P,c)$ is unimodular with respect to $L_P$.)
\item If the vertices of $Q$ are ordered $v_1$, $v_2$, ~\dots, $v_{\dim(P)+1}$, then for all $i = 1$, ~\dots, $\dim(P)$, every face of $P$ which contains $v_i$ also contains $v_{i+1}$.
\end{enumerate}
\end{lem}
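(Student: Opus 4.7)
I would proceed by induction on $d = \dim P$, with the base case $d = 0$ vacuous. For the inductive step, I first assemble the boundary: since $c \geq d+1 > \dim F + 1$ for every proper face $F$ of $P$, the inductive $\Sigma'(F, c)$ is defined, and functoriality under face inclusion glues these consistently into a canonical unimodular triangulation of the lifted boundary $\bigcup_{F \subsetneq P} cF \subseteq \partial(cP)$, each of whose top simplices already satisfies (2) relative to the face lattice of its containing facet (and hence of $P$, since any face of $P$ that meets a point in a facet contains a corresponding face of that facet).

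To extend to the interior, I exploit the fact that $c \geq d+1$ is precisely the condition under which $\mathrm{int}(cP) \cap L_P$ is nonempty: the point $e = w_0 + \sum_{i=1}^{d}(w_i - w_0)$ is a lattice point of $L_P$ interior to $cP$, and is the unique such point when $c = d+1$. For $c = d+1$, coning from $e$ to the boundary triangulation fills $cP$, and a direct determinant calculation (exploiting that $|c-d|=1$ and that each boundary $Q'$ is unimodular inside the affine span of the containing facet) shows each cone $\mathrm{conv}(e, Q')$ is unimodular with respect to $L_P$. For $c > d+1$ the naive star from $e$ produces cones of index $c-d > 1$; in that case I would instead triangulate the inner ``core'' of interior lattice points---a translate of $(c-d-1)P$---by invoking Theorem~\ref{thm:cansimp}, and fill the shell between the core and the boundary by an iterated Cayley-type construction compatible with both boundary and core.

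The vertex ordering on each top simplex $Q$ is defined by assigning to each vertex $v$ its level $F_v \subseteq P$, namely the minimal face of $P$ with $v \in cF_v$, and ordering so that $F_{v_1} \supseteq F_{v_2} \supseteq \cdots \supseteq F_{v_{d+1}}$; ties among vertices sharing a common level $F$ are broken canonically via the ordering on $cF \cap L_F$ inductively supplied by $\Sigma'(F, c)$ (or by the core triangulation in the case where several vertices sit at level $P$). Condition (2) is then immediate: for any face $G$ of $P$ with $v_i \in cG$, we have $G \supseteq F_{v_i} \supseteq F_{v_{i+1}}$, hence $v_{i+1} \in cG$. Unimodularity and functoriality with respect to face inclusions are checked at each step of the construction.

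\textbf{Main obstacle.} The chief difficulty is the interior extension when $c > d+1$: single-point starring from $e$ ceases to yield unimodular simplices, so one must delicately interleave the inductively defined boundary triangulation with a Theorem~\ref{thm:cansimp}-based triangulation of the inner core through a shell region that stays simultaneously canonical, unimodular, and compatible with the level ordering. The hypothesis $c \geq d+1$ plays a double role: it guarantees both a lattice apex for starring (along with a nonempty inner core of interior lattice points) and enough ``vertical room'' in the face lattice of $P$ for a $(d+1)$-term chain of face levels to be realized by the vertices of a full-dimensional unimodular simplex sitting inside $cP$.
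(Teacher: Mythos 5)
Your plan is sound only in the easy regime and leaves the central construction unbuilt. The base-change step (gluing the inductively defined $\Sigma'(F,c)$ over the facets) and the coning argument for $c=\dim(P)+1$ are fine, but for $c>\dim(P)+1$ --- which is the generic case --- you only gesture at ``an iterated Cayley-type construction compatible with both boundary and core.'' That shell-filling is precisely the mathematical content of the lemma, and nothing in your sketch specifies the cells, proves they are unimodular with respect to $L_P$, or shows the result is canonical (functorial under face maps) and projective. Moreover, your ordering scheme tacitly assumes that in every full-dimensional simplex $Q$ the minimal faces of $P$ containing the vertices of $Q$ form a chain in the face lattice; this is not automatic for an arbitrary unimodular refinement of a shell and has to be engineered into the construction, so condition (2) is not ``immediate'' until the shell triangulation is actually exhibited with that property.

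For comparison, the paper does not induct on dimension at all: it subdivides $cP$ directly into cells indexed by flags of faces $F_r<\dots<F_d=P$, namely $(cP)_{F_r,\dots,F_d}=\conv\bigcup_{i=r}^d\bigl((c-i-1)F_r+(i+1)O_{F_i}\bigr)$ with $O_{F_i}$ the barycenter of $F_i$ (your ``core'' is the cell of the trivial flag $r=d$). Each such cell is lattice equivalent to the Cayley polytope $\Cay((c-r-1)F_r,\dots,(c-d-1)F_r)$ and is then triangulated by Lemma~\ref{lem:polysubdv}, the canonical unimodular triangulation of Cayley polytopes of dilated simplices; the flag structure is exactly what makes the vertex levels of every resulting simplex a chain, giving (2), and canonicity and projectivity come along with the canonical Cayley triangulation. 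So the missing ``shell'' step in your proposal is where the paper invokes its key Cayley-polytope machinery, and without an explicit substitute your argument does not establish the lemma for $c>\dim(P)+1$.
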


\begin{proof}
Note: This proof uses ideas and notation from the next section. We have put the proof in this section for the sake of organization.

We construct the triangulation $\Sigma'(P,c)$ of $cP$ as follows. For each face $F$ of $P$, let $O_F$ be the barycenter of $F$. If $F$ has dimension $k$ and $F'$ is a face of $F$, then we note that
\[
\phi(F',F) := (c-k-1)F' + (k+1)O_F
\]
is a lattice polytope contained in $cF$.

Let $d = \dim(P)$. Let $F_{r}$, $F_{r+1}$, ~\dots, $F_d$ be a sequence of nonempty faces of $P$ with $F_r < ~\dotsb < F_d$ and $\dim(F_i) = i$ for all $r \le i \le d$. We define
\[
(cP)_{F_r,\dots,F_d} := \conv \bigcup_{i=r}^d \phi(F_{r},F_i).
\]
Then the collection of all such $(cP)_{F_{r},\dots,F_d}$ are the full-dimensional cells of a subdivision $\Sigma$ of $cP$.

The final step is to refine $\Sigma$ to a triangulation. When viewed as lattice polytopes in $L_P$, each $(cP)_{F_r,\dots,F_d}$ is lattice equivalent to the Cayley polytope
\[
\Cay( (c-r-1)F_r, (c-r-2)F_r, \dots, (c-d-1)F_r ).
\]
Thus, by Lemma~\ref{lem:polysubdv}, there are canonical triangulations of each $(cP)_{F_r,\dots,F_d}$ which are unimodular with respect to $L_P$. These extend to a triangulation of $\Sigma$ which is unimodular in $L_P$. The fact that this triangulation satisfies property (2) is easy to check, as is canonicity.
\end{proof}

\subsection{Cayley polytopes}

Let $(P_1,\bfZ^d)$, $(P_2,\bfZ^d)$ ~\dots, $(P_n,\bfZ^d)$ be lattice polytopes with the same lattice $\bfZ^d$. To simplify some statements later, we will assume that $L_{P_j}$ contains the origin for all $j$. (Note that we can always translate the $P_j$ so that this holds.) Let $P$ be the column array $(P_1 \dots P_n)^T$. We define the \emph{Cayley polytope} $\Cay(P)$ to be the polytope
\[
\Cay(P) := \conv \left( \bigcup_{i=1}^n P_i \times e_i \right) \subset \bfR^d \times \bfR^n
\]
where $\conv$ denotes convex hull and $e_i$ is the $i$-th standard basis vector of $\bfR^n$. We will also occasionally write $\Cay(P_1,\dots,P_n)$ instead of $\Cay(P)$.

We make $\Cay(P)$ a lattice polytope by equipping it with the affine lattice $\bfZ^d \times \Lambda^{n-1}$, where $\Lambda^{n-1} := \{ x \in \bfZ^n : x_1 + \dots + x_n = 1 \} \cong \bfZ^{n-1}$. We have
\[
L_{\Cay(P)} = L_P \times \Lambda^{n-1}
\]
where $L_P := \langle L_{P_1}, \dots, L_{P_n} \rangle$ is the lattice generated by $L_{P_1}$, \dots, $L_{P_n}$. We define
\[
\ind(P) := [ \bfZ^d \cap (L_P \otimes \bfR) : L_P],
\]
so that $\ind(\Cay(P)) = \ind(P)$.

If $P_1$, ~\dots, $P_n$ are ordered polytopes, then we make $\Cay(P)$ an ordered polytope with the following ordering: First the vertices of $P_1 \times e_1$ in the order given by $P_1$, then the vertices of $P_2 \times e_2$ in the order given by $P_2$, and so on. (Note that these are precisely the vertices of $\Cay(P)$.)

Let $A$ be an $m \times n$ matrix with nonnegative integer entries and let $P$ be as above. We define
\[
AP := \left( \sum_{j=1}^n A_{1j}P_j, \sum_{j=1}^n A_{2j}P_j, \dots, \sum_{j=1}^n A_{mj}P_j \right)^T,
\]
with the sum being Minkowski sum. In other words, we define matrix multiplication in the expected way. If $A$ is a row vector, then $AP$ has one entry, and we identify $AP$ with this entry.

A \emph{polysimplex}, or \emph{product of simplices}, is a polytope of the form $\sum_j P_j$, where $\{P_j\}$ is an affinely independent set of simplices. If there is an ordering on such a set $\{P_j\}$ and each $P_j$ is an ordered polytope, then we make $\sum_j P_j$ an ordered polytope by lexicographic ordering on its vertices. Thus, if $A$ is a matrix as above and $P = (P_1 \dots P_n)^T$ with $\{P_j\}_{j=1}^n$ an affinely independent set of ordered simplices, then each entry of $AP$ is an ordered polysimplex, and $\Cay(AP)$ is an ordered polytope. In addition, with the assumption that $L_{P_j}$ contains the origin for all $j$, the vertices of $\Cay(AP)$ are contained in $L_{P} \times \Lambda^{m-1}$.

We set some final notation regarding matrices. In the following, assume $P$ is a column array with $n$ lattice polytope entries and $A$ is an $m \times n$ nonnegative integer matrix. Let $A_i$ denote the $i$-th row of $A$. Let $\supp A$ denote the set of column indices at which $A$ is not a 0-column. Let $P[ i, q]$ denote the column matrix obtained by replacing the $i$-the entry of $P$ with $q$. Let $P_A$ denote the column matrix obtained by restricting $P$ to the entries indexed by $\supp A$. If $\supp A = \emptyset$, then we define $P_A$ to be a single entry which is the origin.

\subsection{Overview of the proof}

In this section, we will reduce the problem to constructing certain canonical triangulations of Cayley polytopes of polysimplices.

Let $\calF$ be the category whose objects are tuples $(P,A)$ satisfying the following.
\begin{enumerate}
\item $P = (P_1,\dots,P_n)^T$  where $P_1$, \dots, $P_n$ are affinely independent ordered lattice simplices with the same lattice.
\item $A$ is an $m \times n$ matrix with nonnegative integer entries.
\end{enumerate}
The morphisms in $\calF$ are $(F,A') \to (P,A)$, where $F_i$ is a face of $P_i$ for all $i$, and $A'$ is obtained from $A$ by taking a subset of the rows of $A$. We have a full and faithful functor $\Cay : \calF \to \calP$ given by $(P,A) \mapsto \Cay(AP)$.

\begin{lem} \label{lem:polysubdv}
There is a projective canonical subdivision $\Sigma$ of $\Cay$ such that for all $(P,A) \in \Ob(\calF)$, the triangulation $\Sigma(P,A)$ is unimodular with respect to $L_P \times \Lambda^{m-1}$.
\end{lem}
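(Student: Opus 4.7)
The plan is to combine the Cayley trick with a staircase product construction and then apply Theorem~\ref{thm:cansimp} factor by factor. Recall that subdivisions of a Cayley polytope $\Cay(Q_1,\ldots,Q_m)$ correspond bijectively to mixed subdivisions of the Minkowski sum $Q_1+\cdots+Q_m$, with fine mixed subdivisions corresponding to triangulations. In our setting $Q_i=\sum_j A_{ij}P_j$, and affine independence of the $P_j$ forces $\sum_i Q_i=\sum_j c_j P_j$ (with $c_j=\sum_i A_{ij}$) to be combinatorially the product $\prod_j c_j P_j$. The task therefore reduces to producing a canonical fine mixed subdivision of this product that respects the Cayley decomposition.

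First, for each $j$ with $c_j>0$, apply Theorem~\ref{thm:cansimp} to $(P_j,c_j)$ to obtain a projective canonical triangulation of $c_j P_j$ unimodular with respect to $L_{P_j}$. Second, using the vertex orderings on the $P_j$, refine the Cartesian product of these factor triangulations into the canonical staircase triangulation of $\prod_j c_j P_j$, which is then unimodular with respect to $L_P$ and canonical in the ordered data. Third, convert the staircase triangulation into a fine mixed subdivision of $\sum_i Q_i$ by assigning each of the $c_j$ unit steps in the $j$-th factor to a specific row index $i$: fill row $1$ first with $A_{1j}$ steps, then row $2$ with the next $A_{2j}$ steps, and so on. By the Cayley trick this yields the projective triangulation $\Sigma(P,A)$ of $\Cay(AP)$.

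Unimodularity with respect to $L_P\times\Lambda^{m-1}$ follows because each maximal cell of $\Sigma(P,A)$ is obtained by Cayley-lifting a mixed cell: its $L_P$-projection is a product of unimodular simplices from the factor triangulations, while its $\Lambda^{m-1}$-part is spanned by the successive row assignments of the staircase steps, which form a unimodular basis of $\Lambda^{m-1}$. Canonicity on $\calF$ is inherited from the canonicity of Theorem~\ref{thm:cansimp}, the vertex-ordering-dependent staircase construction, and the row-filling rule, which restricts correctly under face inclusions $F_i\subseteq P_i$ and under deletion of rows from $A$. Projectivity is obtained by combining the explicit height functions from Theorem~\ref{thm:cansimp} for each factor with a staircase height and a Cayley lift, in the spirit of the earlier remark.

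The main difficulty I anticipate lies in the third step: verifying that the row-by-row staircase filling actually produces a fine mixed subdivision unimodular in the full lattice $L_P\times\Lambda^{m-1}$, rather than merely in some sublattice. One must carefully track how each unit step simultaneously contributes to the $L_P$-coordinates, through the factor triangulation, and to the $\Lambda^{m-1}$-coordinates, through its row assignment, and then invoke affine independence of the $P_j$ to argue that these two contributions span complementary directions whose combined lattice is genuinely $L_P\times\Lambda^{m-1}$.
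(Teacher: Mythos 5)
Your reduction via the Cayley trick is fine as far as it goes: a triangulation of $\Cay(AP)$ corresponds to a fine mixed subdivision of $Q_1+\dots+Q_m=\sum_j c_jP_j$ (where $Q_i=\sum_j A_{ij}P_j$, $c_j=\sum_i A_{ij}$), and unimodularity with respect to $L_P\times\Lambda^{m-1}$ translates into the condition that every full-dimensional mixed cell $B_1+\dots+B_m$ (with $B_i$ a summand attached to $Q_i$) consists of affinely independent simplices with $\bigoplus_i L_{B_i}=L_P$. But your Step 3 is a genuine gap, and it is exactly where the whole difficulty of the lemma sits. A triangulation of $\sum_j c_jP_j\cong\prod_j c_jP_j$ carries no row structure at all: a fine mixed subdivision subordinate to the decomposition $Q_1+\dots+Q_m$ is additional data, not a relabelling of an existing triangulation, and its cells are polysimplices $B_1+\dots+B_m$, not simplices, so it is a coarsening whose existence must be proved. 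The ``fill row $1$ first'' rule is not even well defined for the triangulations of Theorem~\ref{thm:cansimp}: a maximal cell of $\Sigma(P_j,c_j)$ is a $\dim P_j$-dimensional simplex and does not decompose into $c_j$ ``unit steps'' matching the dilation factor. Even granting some assignment of steps to rows, you would still have to show (a) that within each staircase cell the steps assigned to row $i$ span a simplex $B_i$ lying in a translate inside $Q_i$, (b) that the $B_i$ are affinely independent and satisfy the direct-sum condition $\bigoplus_i L_{B_i}=L_P$ (which is strictly stronger than $L_P$-unimodularity of the staircase triangulation itself), (c) that adjacent staircase cells receive compatible assignments so that the grouped cells form an honest polyhedral mixed subdivision, and (d) that the outcome is functorial for the morphisms of $\calF$ --- in particular under passing to a subset of the rows of $A$, where a greedy row-filling rule changes the assignment globally and the induced subdivision on the face $\Cay(A'F)$ is not obviously the one built from $(F,A')$. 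None of these is a formality; together they are essentially a restatement of the lemma.

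This is why the paper does not use Theorem~\ref{thm:cansimp} as a black box. Its proof (Lemma~\ref{lem:polysubdv1}) builds the triangulation of $\Cay(AP)$ directly by a recursion of elementary splittings $f_{ij}$ that lower the entries of $A$, proves that the result is independent of the order of the moves via the diamond lemma (this is what secures canonicity on all of $\calF$, including row deletion), and obtains unimodularity because the terminal pieces are visibly unimodular simplices in $L_P\times\Lambda^{m-1}$. If you want to pursue your mixed-subdivision picture, you would have to construct the fine mixed subdivision with the lattice direct-sum property directly --- which amounts to redoing that recursion --- rather than hoping to extract it from the factorwise dilations.
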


We postpone the proof to the next section, and proceed to find a way to lower the indices of the polytopes in a polytopal complex. For this, we will use the idea of ``box points'' (or Waterman points) as introduced by Waterman for the KMW theorem. (Note that our final construction will be different from theirs when restricted to simplices.)

Let $P = (P_1 \dots P_n)^T$ where $\{P_j\}$ is an affinely independent set of lattice simplices in $\bfZ^d$. For simplicity, assume that $L_{P_j}$ contains the origin for all $j$. Recall that $L_P$ is the lattice generated by $L_{P_1}$, \dots, $L_{P_n}$. A \emph{box point} of $P$ is a nonzero element of $G_P := (\bfZ^d \cap (L_P \otimes \bfR)) / L_P$. If $F = (F_1 \dots F_n)^T$ is such that $F_j$ is a face of $P_j$ for all $j$, then there is a natural inclusion $G_F \into G_P$, and so any box point of $F$ can be regarded as a box point of $P$. Moreover, if we have two arrays $P$, $Q$ with $L_P = L_Q$, then we identify the box points of $P$ with the box points of $Q$.

We make the following crucial observation: If $\bfm$ is a box point of $P$, then there is a unique minimal (in the product order) $N$-tuple $(c_1,\dots,c_n)$ of nonnegative integers such that the polytope $\sum_{j=1}^n c_j P_j$ contains a representative of $\bfm$. This tuple satisfies $0 \le c_j \le \dim P_j$ for all $j$, and the representative is unique. We denote the row vector $(c_1 \dots c_n)$ by $c(P,\bfm)$. If $\bfm$ is not a box point of $P$, then we set $c(P,\bfm)$ to be the 0-vector. Note that if $F$ is as above and $\mathbf{m}$ is also a box point of $F$, then $c(F,\bfm) = c(P,\bfm)$.

Let $\bfm$ be a box point of $P_0$ for some $(P_0,A_0) \in \Ob(\calF)$. We define $\calF^{\bfm}$ to be the full subcategory of $\calF$ whose objects are $(P,A) \in \Ob(\calF)$ satisfying the following:
\begin{enumerate}
\item Let $c = c(P, \bfm)$. Then for all $i \in [m]$ and $j \in [n]$, we have
\[
A_{ij} = 0 \text{ or } A_{ij} \ge c_j.
\]
\item We have
\[
\supp A_1 \cap \supp c \supseteq \supp A_2 \cap \supp c \supseteq \dots \supseteq \supp A_m \cap \supp c.
\]
\end{enumerate}
Let $\Cay^{\bfm}$ be the restriction of $\Cay$ to $\calF^{\bfm}$.

We will prove the following in Section~\ref{sec:boxsubdv}

\begin{lem} \label{lem:prebox}
Let $\bfm$ be as above. Then there is a projective canonical triangulation $\Sigma^{\bfm}$ of $\Cay^{\bfm}$ such that for all $(P,A) \in \Ob(\calF^{\bfm})$ and all full-dimensional simplices $Q$ in $\Sigma^{\bfm}(P,A)$, we have the following.
\begin{itemize}
\item If $\bfm$ is a box point of $P_A$, then
\[
\ind(Q) < \ind(P_A).
\]
\item If $\bfm$ is not a box point of $P_A$, then $L_Q = L_{P_A} \times \Lambda^{m-1}$.
\end{itemize}
\end{lem}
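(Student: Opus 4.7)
The plan is to split into two cases depending on whether $\bfm$ is a box point of $P_A$. If it is \emph{not}, then $\supp c(P,\bfm)\not\subseteq\supp A$ and the zero columns of $A$ contribute nothing to the Minkowski sums, so $\Cay(AP)=\Cay(A_{\supp A}P_A)$ as a lattice polytope with vertex-lattice $L_{P_A}\times\Lambda^{m-1}$. Applying Lemma~\ref{lem:polysubdv} to $(P_A,A_{\supp A})$ yields a projective canonical triangulation that is unimodular with respect to $L_{P_A}\times\Lambda^{m-1}$, which is exactly the required output.

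For the main case $\supp c(P,\bfm)\subseteq\supp A$, set $c=c(P,\bfm)$ and let $w\in\sum_j c_jP_j$ be the unique representative of $\bfm$. By condition~(2) in the definition of $\calF^{\bfm}$ there is a maximal $k\in\{0,\dots,m\}$ with $\supp A_i\supseteq\supp c$ for all $i\le k$; condition~(1) then gives Minkowski decompositions $A_iP=cP+(A_i-c)P$ for $i\le k$, so that $(w,e_i)$ is a lattice point of $\Cay(AP)$ lying inside the slice $A_iP\times\{e_i\}$ for every such $i$. I would construct $\Sigma^{\bfm}(P,A)$ in two stages. First, form the regular projective subdivision of $\Cay(AP)$ given by the height function that assigns height $-1$ to each of the inserted points $(w,e_i)$, $i=1,\dots,k$, and height $0$ to every original vertex of $\Cay(AP)$. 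The chain condition~(2) is precisely what forces each full-dimensional cell of this subdivision to be, up to affine lattice equivalence, a Cayley polytope $\Cay(BP')$ attached to fresh polysimplex data $(P',B)\in\Ob(\calF)$, where $P'$ involves the blocks $(A_i-c)P$ replacing $A_iP$ in the peeled rows and the original $A_jP$ in the unpeeled rows, together with the extra simplex factor carrying $w$. Second, apply Lemma~\ref{lem:polysubdv} to each such peeled cell and assemble the pieces into $\Sigma^{\bfm}(P,A)$.

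For a full-dimensional simplex $Q$ of $\Sigma^{\bfm}(P,A)$, the peeled cell containing $Q$ has at least one of the inserted $(w,e_i)$ among its vertices, and since $w$ represents a nontrivial class of the box group $G_{P_A}$ the affine sublattice generated by the vertices of that cell properly contains $L_{P_A}\times\Lambda^{m-1}$; Lemma~\ref{lem:polysubdv} then gives $L_Q\supsetneq L_{P_A}\times\Lambda^{m-1}$, i.e.\ $\ind(Q)<\ind(P_A)$. Functoriality under the face-restriction and row-selection morphisms of $\calF^{\bfm}$ is automatic, because both $c(P,\bfm)$ and the support chain are preserved under these morphisms; projectivity combines the explicit first-stage height function with the projective canonical subdivisions of Lemma~\ref{lem:polysubdv}. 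The main obstacle I anticipate is rigorously identifying each peeled cell as a Cayley polytope in $\Ob(\calF)$ after the lattice change---that is, checking that the decompositions $A_iP=cP+(A_i-c)P$ glue coherently along the chain $\supp A_1\cap\supp c\supseteq\dots\supseteq\supp A_m\cap\supp c$ into fresh polysimplex data without destroying affine independence of the simplex factors, and verifying that the strict index drop holds uniformly across all peeled cells; this combinatorial bookkeeping is where the bulk of the proof will lie and where condition~(2) earns its keep.
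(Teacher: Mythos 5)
Your overall strategy -- insert the unique representative of $\bfm$ into $\Cay(AP)$, subdivide, and triangulate the resulting cells by the canonical Cayley triangulation -- is the right spirit, but two of your key steps have genuine gaps. First, the index drop is not established. You argue that a full-dimensional simplex $Q$ lies in a stage-1 cell containing some inserted point $(w,e_i)$, and that therefore the affine lattice spanned by that cell's vertices ``properly contains $L_{P_A}\times\Lambda^{m-1}$''. This does not follow: a full-dimensional cell of your pulling subdivision need only meet some of the original vertices, so the lattice its vertices generate can miss directions of $L_{P_A}$ even though it picks up $w$; full-dimensional affine span over $\bfR$ does not give lattice generation over $\bfZ$ (if it did, the whole lemma would be trivial). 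Moreover, Lemma~\ref{lem:polysubdv} only gives $L_Q = L_{P'}\times\Lambda$ for the cell's \emph{own} polysimplex data $(P',B)$, and you never compare $L_{P'}$ with $L_{P_A}$. The paper's proof (of the refinement, Lemma~\ref{lem:box}) gets the strict inequality not from ``the cell contains $w$'' but from an induction interleaved with lattice-distance factorizations: e.g.\ $\ind(\phi_2(Q))=h\,\ind(Q)$ with $\ind(Q)<\ind(F)$ when $\bfm$ is still a box point of the face $F$, and $\ind(Q)=h'\ind(F)$ or $h''\ind(F')$ with $h'<h$ (the hyperplane through $m+(A_i-c)F$ lying strictly between $\AffSpan(v+A_i'F)$ and $\AffSpan(A_iF)$) when it is not. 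Some such quantitative argument is unavoidable, and your proposal contains none.

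Second, the two points you defer -- identifying the peeled cells as objects of $\calF$, and canonicity -- are exactly where the real work lies, and your one-shot height-function construction makes both harder, not easier. The paper never pulls all the points $(w,e_i)$ simultaneously; it inserts $m$ only at carefully chosen moments of a recursion built from the same moves $f_{I_k,j}$ that define the canonical triangulation, precisely so that (a) each piece ($Q_1$, $Q_2$, $R_1$, $R_2$, $R_G$, $R_F$, $R_A$) is explicitly a Cayley polytope of new data obtained by inserting the row $A_i-c$ \emph{inside the stratum} $I_1$ -- which is why the stratified-matrix category $\tilde\calF$ and Lemma~\ref{lem:polysubdv2} are introduced at all -- and (b) on a face $(F,A')$ where $\bfm$ ceases to be a box point, the restricted triangulation provably coincides with the plain canonical triangulation of Lemma~\ref{lem:polysubdv2}, which is the content of the canonicity paragraph of the paper's proof. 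Your claim that functoriality is ``automatic'' glosses over exactly this cross-case compatibility: with your stage-1 pulling subdivision, a face containing no inserted point is subdivided trivially in stage 1, and nothing in your setup forces the stage-2 triangulations of adjacent cells to restrict to $\Sigma(F,A')$ there. So the proposal, as it stands, is missing the inductive structure, the stratified refinement, and the index bookkeeping that constitute the paper's argument.
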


Assuming this lemma, we can now prove Theorem~\ref{thm:polytopes}.

\begin{proof}[Proof of Theorem~\ref{thm:polytopes}]
By Proposition~\ref{prop:good}, we may assume $f : X \to Y$ is a good map.

\emph{Step 1: Reducing the base.}

As discussed before, the first step is to alter $B$ so that it is a unimodular triangulation. By the KMW theorem \cite{KKMS}, there is a positive integer $c$ so that we have a projective unimodular triangulation $B' \to cB$. Now, by Proposition~\ref{prop:induced}, there is a positive integer $c'$ and an alteration $B' \to c'cB$ such that this alteration induces with respect to $c'cf$ an alteration $X_1 \to c'cX$ and map $X_1 \to B'$. Hence, we may assume $B$ is a unimodular triangulation. Choosing any triangulation, for example, the barycentric one, we may assume that $X$ is triangulated.

%\subsubsection{Initial triangulation of $X$}

%We next want to triangulate $X$ so that the fibers of vertices of $B$ are unimodular triangulations. By the KKMS theorem, there exists $c$ such that for each vertex $v \in B$, we have a projective unimodular triangulation $Y_v \to cf^{-1}(v)$. Let $B \to cB$ be the lattice alteration $(P,N_P) \mapsto (cP,cN_P)$. This induces an alteration $X_1 \to cX$ and map $f_1 : X_1 \to B$ so that $f_1^{-1}(v) = cf^{-1}(v)$ for all vertices $v \in B$. Since the $f_1^{-1}(v)$ are pairwise disjoint over $v$, basic subdivision theory says there is a projective triangulation $Y \to X_1$ such that $Y|_{f_1^{-1}(v)} = Y_v$ for all vertices $v \in B$. Thus, we can assume that $X$ is a triangulation and $f^{-1}(v)$ is a unimodular triangulation for all vertices $v \in B$.

\emph{Step 2: Lowering the index.}

%Fix a linear order on the vertices of $B$. 
By Lemma~\ref{lem:stcan}, there exists $c$ such that for each $Q \in B$ we have a unimodular projective triangulation
\[
\Sigma'(Q,c) \to cQ.
\]
Since these triangulations are canonical, this gives a unimodular triangulation $B_1$ of $cB$. %They are projective because we can modify the height function given before the subdivision by a sufficiently small height function arising in Lemma~\ref{lem:stcan}, preserving projectivity (this argument is due to Morelli, see \cite{MR1486987}).

By Proposition~\ref{prop:induced}, for some $c'$, the alteration $B_1 \to c'B_1 \to c'cB$ induces an alteration $X_1 \to c'cX$ and a map $f_1 : X_1 \to B_1$. We may assume $c' > \dim X$.

Suppose $Q \in B$ and $\Delta$ is a full-dimensional simplex of the complex $f^{-1}(Q)$. Let $v_1$, ~\dots, $v_n$ be the vertices of $Q$, and let
\[
P_i := (f^{-1}(v_i) \cap \Delta, N_{f^{-1}(v_i)})
\]
for all $i$. Let $P = (P_1 \dots P_n)^T$. Note that the $P_i$ are affinely independent simplices and $\Delta = \Cay(P)$. %By the previous section, we also have that $(P_i,N_P \cap f^{-1}(v_i))$ is unimodular for all $i$.

Let $v$ be a vertex of $B_1$ contained in $cQ$. Define
\[
P(v) := f^{-1}_1(v) \cap c'c\Delta.
\]
Let $(a_1,\dots,a_n)$ be the barycentric coordinates of $v$ with respect to the vertices $cv_1$, ~\dots, $cv_n$ of $cQ$. Since $Q$ is unimodular, $ca_1$, ~\dots, $ca_n$ are nonnegative integers. From the definition of $X_1$, we have
\begin{align*}
P(v) &= a_1 P(cv_1) + a_2 P(cv_2) + \dotsb + a_kP(cv_n) \\
&= c'c (a_1 P_1 + a_2P_2 + \dotsb + a_nP_n).
\end{align*}
Thus, if $Q_1 \in B_1$ has vertices $u_1$, ~\dots, $u_m$, we have
\begin{align*}
f_1^{-1}(Q_1) \cap c'c\Delta &= \Cay( P(u_1), \dots, P(u_m) ) \\
&= \Cay( A P )
\end{align*}
where $A$ is an $m \times n$ matrix of nonnegative integers divisible by $c'$. By Lemma~\ref{lem:stcan}, we also have
\[
\supp A_1 \supseteq \supp A_2 \supseteq \dots \supseteq \supp A_m.
\]
This argument shows that every $R \in X_1$ is of the form $\Cay(A_R P_R)$ with $(P_R,A_R)$ satisfying the above conditions and where $\Delta_R := \Cay(P_R)$ is an element of $X$.

%Thus, by Lemma~\ref{lem:canpoly}, we have a subdivision $X_1' \to X_1$ where every polytope is of the form $P' = \Cay(P_1',\dots,P_n',A')$ where $(P_1',\dots,P_n',A') \in \Ob(\calF')$ and $L_{P'} = L_P$ for some $P \in X$. Let $\mathbf{m}$ be a box point of some $P \in X$. By Lemma~\ref{lem:box}, we have a triangulation $Y \to X_1'$ where each $P' \in X_1'$ is triangulated into $\Sigma^{\mathbf{m}}(P')$.

Let $\bfm$ be a box point of $P$. By the above conditions on $A_R$, we have that $(P_R,A_R) \in \Ob(\calF^{\bfm})$ for all $R \in X_1$.
Thus, by Lemma~\ref{lem:prebox}, we have a projective triangulation $Y \to X_1$ where each $R \in X_1$ is triangulated into $\Sigma^{\bfm}(P_R,A_R)$. If $\bfm$ is a box point of $P_R$, then for every full-dimensional simplex $Q$ of $Y|_R$ we have $\ind(Q) < \ind(P_R) = \ind(\Delta_R)$, and otherwise $L_Q = L_{\Delta_R}$.

Now repeat the process of Step 2 with $Y$ instead of $X$. Each time we do this procedure, we lower the indices of some of the lattices spanned by elements of $X$ while keeping the other lattices the same. Eventually all lattices will be unimodular, completing the proof.
\end{proof}

\subsection{Canonical triangulations of Cayley polytopes} \label{sec:polysubdv}
To prove Lemma~\ref{lem:prebox}, we will first prove Lemma~\ref{lem:polysubdv}. We will give two distinct triangulations satisfying this lemma. Both are generalizations of the Haase et al.\ construction \cite{Santos}, and both are the same when restricted to polysimplices. The first triangulation is described in the authors' previous work \cite{ALPT} with Pak. We will use this triangulation mainly to help define the second triangulation, which is what we will need later. The definitions given here are entirely recursive; for a more explicit description of the first triangulation, see \cite{ALPT}. The advantage of the recursive approach is that it will make very abstract statements easier to prove.

\subsubsection{The first triangulation} \label{sec:polysubdv1}

Let $(P,A) \in \calF$. Given indices $i$, $j$, we say that $A$ is \emph{$(i,j)$-reducible} if $A_{ij} > 0$.
Suppose $A$ is $(i,j)$-reducible, and let $\phi : \Cay(AP) \to Q$ be any affine map. We will define a set $f_{ij}(P,A,\phi)$ as follows.

Let $A'$ be the matrix obtained from $A$ by subtracting 1 from the $A_{ij}$ entry. If $\dim P_j = 0$, then we set
\[
f_{ij}(P,A,\phi) = \{(P,A',\phi)\}.
\]
Note that in this case $\Cay(AP) = \Cay(A'P)$, since by the assumptions in the previous section $P_j$ is the origin. We therefore have a map $\phi: \Cay(A'P) \to Q$.

Now assume $\dim P_j > 0$. First assume $\sum_{i'} A_{i'j} > 1$. Let $v$ be the first vertex of $P_j$ and let $F_j$ be the facet of $P_j$ opposite $v$. Let $F$ be the array obtained from $P$ by replacing $P_j$ with $F_j$. Then $\Cay(AP)$ has a subdivision into the two full-dimensional polytopes, one of which is
\[
Q_1 := \Cay( (AP)[i, v + A'_i P])
\]
and the other of which is
\[
Q_2 := \Cay( (AF)[i, \conv( (v + A'_i F) \cup (A_i F))]).
\]
We have a lattice polytope isomorphism $\phi_1 : \Cay(A'P) \to Q_1$. In addition, we have an affine isomorphism $\phi_2 : \Cay(A''F) \to Q_2$, where $A''$ is obtained from $A$ by inserting the row $A_i'$ above the $i$-th row of $A$. Note that $Q_1$ and $Q_2$ have vertices in $L_P \times \Lambda^{m-1}$. If we view $Q_2$ as having the ambient lattice $L_P \times \Lambda^{m-1}$, then $\phi_2$ is a lattice polytope isomorphism.

In the above situation, we define
\[
f_{ij}(P,A,\phi) := \{ (P,A',\phi \circ \phi_1), (F,A'',\phi \circ \phi_2) \}
\]
We order $f_{ij}(P,A,\phi)$ so that $(P,A',\phi \circ \phi_1)$ the first element and $(F,A'',\phi \circ \phi_2)$ is the second. Note that we have maps $\phi \circ \phi_1 : \Cay(A'P) \to Q$ and $\phi \circ \phi_2 : \Cay(A''F) \to Q$.

Finally, assume that $\sum_{i'} A_{i'j} = 1$. Let $P^\square$ be the array obtained from $P$ by replacing $P_j$ with 0, and let $A^\square$ be the matrix obtained from $A$ by replacing the $i$-th row of $A$ with $\dim P_j + 1$ copies of $A_i'$. Then we have an affine isomorphism $\phi^\square : \Cay(A^\square P^\square) \to \Cay(AP)$. This is a lattice polytope isomorphism if $\Cay(AP)$ is viewed in the lattice $L_P \times \Lambda^{m-1}$. We set
\[
f_{ij}(P,A,\phi) := \{ (P^\square, A^\square, \phi \circ \phi^\square) \}.
\]
Again, we have a map $\phi \circ \phi^\square : \Cay(A^\square P^\square) \to Q$.

Now, for $(P,A) \in \calF$, we say that $A$ is \emph{$j$-reducible} if $A_{ij} > 0$ for some $i$. If $A$ is $j$-reducible, we define
\[
f_j(P,A,\phi) := f_{i_0 j}(P,A,\phi),
\]
where
\[
i_0 := \min \{ i : A_{ij} = \max_{i'} A_{i'j} \}.
\]
This is well-defined because by definition, $A_{i_0 j} > 0$.

We now define a process as follows. Fix $(P_0,A_0) \in \calF$. We keep track of a set $\calS$, which is initially $\{ (P_0,A_0,\mathrm{id}) \}$, where $\mathrm{id}$ is the identity map. A \emph{move} consists of the following. First, we choose and element $\alpha = (P,A,\phi)$ of $\calS$ and a $j$ such that $A$ is $j$-reducible. We then modify $\calS$ by replacing $\alpha$ with the elements of $f_j(\alpha)$. We will call this a ``$j$-move applied to $\alpha$''. We apply such moves one-by-one until there are no moves available.

At each step of the process, we have that
\[
\Phi(\calS) := \{ \phi(\Cay(AP)) : (P,A,\phi) \in \calS \}
\]
is a polytopal dissection of $\Cay(P_0 A_0)$; i.e., it is a set of full-dimensional polytopes with pairwise disjoint interiors and whose union is $\Cay(P_0 A_0)$. In fact, we have the following.

\begin{lem} \label{lem:polysubdv1}
The above process always terminates and the final result $\calS_{\mathrm{final}}$ is independent of the moves chosen. Moreover, the map
\[
(P_0,A_0) \mapsto \Phi(\calS_{\mathrm{final}})
\]
is a projective canonical triangulation $\Sigma$ of $\Cay$ such that $\Sigma(P_0,A_0)$ is unimodular with respect to $L_{P_0} \times \Lambda^{m-1}$.
\end{lem}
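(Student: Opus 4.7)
The plan is to establish termination, confluence, functoriality, unimodularity, and projectivity by strong induction on a well-ordered complexity measure attached to each triple $(P,A,\phi)$. A natural candidate is the lexicographic triple
\[
\Psi(P,A) = \bigl(\mathrm{vol}(\Cay(AP)),\ \#\{j : A_{\cdot j}\ne 0\},\ \textstyle\sum_{i,j} A_{ij}\bigr),
\]
with the volume normalized with respect to $L_P\times\Lambda^{m-1}$. A case-by-case inspection of the three branches of $f_{ij}$ shows that every descendant strictly decreases $\Psi$: in the branch $\dim P_j>0$, $\sum_{i'}A_{i'j}>1$, both $Q_1$ and $Q_2$ are proper full-dimensional pieces of $\Cay(AP)$ with strictly smaller volume; in the $\dim P_j=0$ branch the polytope is unchanged but $\sum A_{ij}$ drops by one (and the number of nonzero columns can only drop); in the $\sum_{i'}A_{i'j}=1$ branch, $\phi^\square$ preserves volume but kills the $j$-th column of $A^\square$, so the second coordinate drops. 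Well-foundedness of the lex order on $\bfN^3$ then yields termination.

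For confluence, moves acting on distinct triples in $\calS$ are trivially independent, so by Newman's lemma together with termination it suffices to verify \emph{local} confluence: if $A$ is both $j_1$- and $j_2$-reducible, then for each one-step descendant in $f_{j_1}(\alpha)$ the states reachable by a further $j_2$-style move can be reconciled with those reachable by swapping the order. I would verify this by a case analysis on which of the three branches of $f_{i_1 j_1}$ and $f_{i_2 j_2}$ are active, checking both that the dissections of $\Cay(AP)$ coincide and that the accompanying affine charts agree on overlaps; the latter is guaranteed by the explicit formulas for $\phi_1,\phi_2,\phi^\square$. I expect this commutation check to be the main technical obstacle: one must track several concrete identities among Minkowski sums, facet-opposite-vertex decompositions, and $\Lambda$-coordinates, but each identity is essentially formal once the branch types are fixed.

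Canonicity is then immediate from the purely combinatorial nature of the definition of $f_{ij}$: the choice of $i_0$ as the smallest index attaining $\max_{i'}A_{i'j}$, and of the first vertex of $P_j$, refer only to the intrinsic orderings of $(P,A)$. For a morphism $(F,A')\to(P,A)$ in $\calF$, restricting to the face $\Cay(A'F)\subset\Cay(AP)$ commutes with each $f_j$ by direct inspection, so by induction on $\Psi$ the triangulation $\Sigma(F,A')$ is the subdivision induced on this face by $\Sigma(P,A)$.

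Finally, when the process halts, every $(P',A',\phi')\in\calS_{\mathrm{final}}$ has $A'=0$, so $\Cay(A'P')$ is the standard simplex in $\Lambda^{m'-1}$, which is unimodular; each of $\phi_1,\phi_2,\phi^\square$ is by construction a lattice isomorphism onto its image when the image is equipped with $L_{P_0}\times\Lambda^{m_0-1}$, so the composite $\phi'$ is as well, transferring unimodularity to the image simplex. For projectivity I would record at each step a canonical height function depending only on $(P,A)$ and $j$ whose induced subdivision realizes the dichotomy $Q_1\cup Q_2$ in the volume-reducing branch and is trivial in the other branches, and then compose these canonical height functions to produce a single canonical height function inducing the entire triangulation, as outlined in the remark preceding the statement.
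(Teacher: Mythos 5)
Your overall route is the same as the paper's: termination via a complexity measure, uniqueness via Newman's/diamond lemma plus a local-confluence case analysis, unimodularity from the fact that terminal states have $A=0$ while every chart is a lattice isomorphism onto its image in $L_{P_0}\times\Lambda^{m-1}$, canonicity via face restriction, and projectivity via canonical height functions. But two steps carrying the real content are missing. First, the local-confluence verification is only announced, and your expectation of a one-step reconciliation (``swapping the order'') is too optimistic: the delicate cases are when both moves act on the same element with the same row index $i$ and one or both columns satisfy $\sum_{i'}A_{i'j}=1$. There the two branches are joined only by longer canonical sequences of moves --- e.g.\ applying $\dim P_{j_1}+1$ successive $j_2$-moves along first elements, or exhausting all $j_1$-moves on $\{f_{ij_2}(\alpha)\}$ --- and checking that these sequences end in the same configuration, charts included (the $\square$-reparametrization changes both $P$ and the number of rows), is precisely the technical heart of the lemma; it cannot be waved through as ``essentially formal once the branch types are fixed.''

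Second, even granting termination, confluence and unimodularity, what you obtain is only a dissection $\Phi(\calS_{\mathrm{final}})$ of $\Cay(A_0P_0)$: full-dimensional unimodular simplices with pairwise disjoint interiors. The lemma asserts a triangulation, i.e.\ that these simplices meet along common faces, and your proposal never addresses this. The paper proves it by induction on volume: the first move splits $\Cay(AP)$ into two pieces with common facet $F$; each piece is triangulated by induction, and the face-compatibility of $f_j$ --- for a morphism $(F,A')\to(P,A)$, either $\Cay(A'F)$ is a face of one of the pieces, or $(F,A')$ is $j$-reducible and the induced dissection on $\Cay(A'F)$ is $\Phi(f_j(F,A',\mathrm{id}))$ --- forces the two triangulations to agree on $F$. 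Your canonicity paragraph contains the needed ingredient but never turns it into this gluing argument; note also that face restriction does not literally ``commute with each $f_j$,'' since the face may lie entirely inside one piece. A minor further point: your lexicographic measure works only if the first coordinate is taken to be, say, the Euclidean volume of the image $\phi(\Cay(AP))$ inside the fixed ambient polytope; with normalization ``with respect to $L_P\times\Lambda^{m-1}$'' the reference lattice changes under the $\square$-move and monotonicity needs an extra check.
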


\begin{proof}
For $(P,A,\phi) \in \calS$, a consideration of how the dimensions of the entries of $P$ and the volume of $\Cay(AP)$ change after each move easily proves termination of the process.

We now prove uniqueness of $\calS_{\mathrm{final}}$. By the diamond lemma, it suffices to prove the following: Suppose $\calS$ is an intermediate configuration and $\calS_1$ and $\calS_2$ are the results of applying moves $M_1$ and $M_2$, respectively, to $\calS$. Then there is a sequence of moves starting from $\calS_1$ and a sequence of moves starting from $\calS_2$ which both end in the same configuration.

If $M_1$ and $M_2$ apply to different elements of $\calS$, then they clearly commute, and we are done. Assume they apply to the same element $\alpha = (P,A,\phi) \in \calS$. We can assume $M_1$ and $M_2$ replace $\alpha$ with $f_{i_1j_1}(\alpha)$ and $f_{i_2 j_2}(\alpha)$, respectively, with $j_1 \neq j_2$. First assume $i_1 \neq i_2$. Then applying a $j_2$-move to all elements of $f_{i_1j_1}(\alpha)$ yields the same result as applying a $j_1$-move to all elements of $f_{i_2 j_2}(\alpha)$, as desired. (Note that these moves are legal due to the existence of $M_1$ and $M_2$.)

Now assume $i_1 = i_2 = i$. First suppose that $\sum_{i'} A_{i'j_1} > 1$ and $\sum_{i'} A_{i'j_2} > 1$. Apply a $j_2$-move to all elements of $f_{i j_1}(\alpha)$. Then, if $f_{i j_1}(\alpha)$ has two elements, apply a $j_2$-move to $f_{j_2}(f_{i j_1}(\alpha)_2)_1$, where $S_i$ denotes the $i$-th element of an ordered set $S$. We can check that this process gives the same result if we replace the roles of $j_1$ and $j_2$, as desired.

Next suppose $\sum_{i'} A_{i'j_1} = 1$ and $\sum_{i'} A_{i'j_2} > 1$. We first define a sequence of moves starting from $\calS_1$. First, apply a $j_2$-move to the one element of $f_{i j_1}(\alpha)$. Then apply a $j_2$-move to $f_{j_2}(f_{i j_1}(\alpha)_1)_1$. Then apply a $j_2$-move $f_{j_2}(f_{j_2}(f_{i j_1}(\alpha)_1)_1)_1$, and so on, until we have applied a total of $\dim P_{j_i} + 1$ moves. Now we define a sequence of moves starting from $\calS_2$. Starting from the set $\{f_{ij_2}(\alpha)\}$, apply $j_1$-moves to elements until there are no $j_1$-moves left. We again check that these two processes have the same result, as desired.

Finally, suppose that $\sum_{i'} A_{i'j_1} = 1$ and $\sum_{i'} A_{i'j_2} = 1$. Apply $j_2$-moves to $\{ f_{ij_1}(\alpha) \}$ until there are no $j_2$-moves available and apply $j_1$-moves to $\{ f_{ij_2}(\alpha) \}$ until there are no $j_1$-moves available. It is a straightforward proof by induction that these two processes have the same result. This completes the proof that $\calS_{\mathrm{final}}$ is unique, and thus $\Sigma$ is well-defined.

We next show that every element of $\Sigma(P_0,A_0)$ is simplex which is unimodular in $L_{P_0} \times \Lambda^{m-1}$. It is easy to see that if $(P,A,\phi) \in \calS_{\mathrm{final}}$, then $\Cay(AP)$ must be a unimodular simplex. Since each $\phi$ is a lattice polytope isomorphism to a polytope with ambient lattice $L_{P_0} \times \Lambda^{m-1}$, the result holds.

To show that $\Sigma$ is a triangulation, we note that $f_j$ is canonical in the following sense: Let $(F,A') \to (P,A)$ be a morphism in $\calF$. Assume $(P,A)$ is $j$-reducible. Then either $\Cay(A'F)$ is a face of one of the elements of $\Phi(f_j(P,A,\mathrm{id}))$, or $(F,A')$ is $j$-reducible and
\[
\Phi(f_j(F,A',\mathrm{id})) = \Phi(f_j(P,A,\mathrm{id})) |_{\Cay(A'F)}.
\]
where $|_{\Cay(A'F)}$ denotes the induced dissection on ${\Cay(A'F)}$. This implies that $\Sigma(F,A') = \Sigma(P,A)|_F$.

We can now prove $\Sigma(P,A)$ is a triangulation by induction on the volume of $\Cay(AP)$. If $\Cay(AP)$ is not already a unimodular simplex, then there is a sequence of moves which subdivides $\Cay(AP)$ into two full-dimensional polytopes; call their common facet $F$. By induction, $\Sigma$ gives triangulations of these two polytopes, and the aforementioned canonicity implies that these triangulations agree on $F$. Thus $\Sigma$ gives a triangulation of $\Cay(AP)$. This argument also shows that the triangulation is canonical. Projectivity is standard to check.
\end{proof}

\subsubsection{The second triangulation} \label{sec:polysubdv2}

This triangulation is similar to the first one but differs in the order we apply the operators $f_{ij}$. To describe the triangulation, we need some additional notation.

A \emph{stratified matrix} is a matrix $A$ of nonnegative integers with rows indexed by a set $I_A$ and columns indexed by $[n]$ for some $n$, along with an ordered partition $\Pi_A = (I_1,\dots,I_p)$ of $I_A$, where each $I_k$ is a an ordered set. For an ordinary matrix $A$ with $I_A = [m]$, we can view $A$ as a stratified matrix with the \emph{standard stratification} $\Pi_A = (\{1\},\{2\},\dots,\{m\})$. Analogously to the definition of $\calF$, we define $\tilde{\calF}$ to be the category whose objects are tuples $(P,A)$, except that $A$ is a stratified matrix. The morphisms are $(F,A') \to (P,A)$ where $F_i$ is a face of $P_i$ for all $i$, $A'$ is obtained from $A$ by restricting to a subset of rows of $A$, and if $\Pi_A = (I_1,\dots,I_p)$, then $\Pi_{A'} = (I_1 \cap I_{A'}, \dots, I_p \cap I_{A'})$. Note that the standard stratification makes $\calF$ a full subcategory of $\tilde{\calF}$. We also have a full and faithful functor $\tilde{\Cay} : \tilde{\calF} \to \calP$ given by $(P,A) \mapsto \Cay(AP)$ which restricts to $\Cay$ on $\calF$.

Let $(P,A) \in \tilde{F}$ with $\Pi_A = (I_1,\dots,I_p)$. Let $A_{I_k}$ be the ordinary matrix obtained by restricting $A$ to the rows indexed by $I_k$. We say that $A$ is \emph{$(I_k,j)$-reducible} if $A_{I_k}$ is $j$-reducible. If $A$ is $(I_k,j)$-reducible, then we define $f_{I_k,j}(P,A,\phi)$ to be the set obtained by applying $f_j$ locally to $(P,A_{I_k})$; in other words,
\[
f_{I_k,j}(P,A,\phi) := \{ (P',\tilde{A'},\phi \circ \tilde{\phi'}) : (P',A',\phi') \in f_j(P,A_{I_k},\mathrm{id}) \}
\]
where $\tilde{A'}$ is obtained from $A$ by replacing $A_{I_k}$ with $A'$ (and $I_k$ with $I_{A'}$ in $\Pi_A$), and $\tilde{\phi'} : \Cay(\tilde{A'}P') \to \Cay(AP')$ is the affine map restricting to $\phi'$ on $\Cay(A'P')$ and the identity on $\Cay(A_{I_l} P')$ for $l \neq k$. We say that $A$ is \emph{$I_k$-reducible} if it is $(I_k,j)$-reducible for some $j$.

Fix $(P_0,A_0) \in \tilde{F}$, and define a process as follows. We keep track of a set $\calS$, which is initially $\{(P_0,A_0,\mathrm{id})\}$. A \emph{move} consists of the following. First, choose an $\alpha = (P,A,\phi) \in \calS$. Let $\Pi_A = (I_1,\dots,I_p)$, and let $k$ be the smallest index for which $A$ is $I_k$-reducible. Choose a $j$ such that $A$ is $(I_k,j)$-reducible, and modify $\calS$ by replacing $\alpha$ with the elements of $f_{I_k,j}(\alpha)$. Apply such moves until there are no moves available.

Defining $\Phi(\calS)$ as before, we have the following.

\begin{lem} \label{lem:polysubdv2}
The above process always terminates and the final result $\calS_{\mathrm{final}}$ is independent of the moves chosen. Moreover, the map
\[
(P_0,A_0) \mapsto \Phi(\calS_{\mathrm{final}})
\]
is a projective canonical triangulation $\Sigma$ of $\tilde{\Cay}$ such that $\Sigma(P_0,A_0)$ is unimodular with respect to $L_{P_0} \times \Lambda^{\lvert I_A \rvert-1}$.
\end{lem}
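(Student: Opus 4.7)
The proof will closely parallel that of Lemma~\ref{lem:polysubdv1}, with the stratification introducing the only new bookkeeping. I plan to verify, in order, the four claims: termination, uniqueness of $\calS_{\mathrm{final}}$, canonicity/triangulation/unimodularity, and projectivity.

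\textbf{Termination.} Each $f_{I_k,j}$-move is, by definition, an $f_{i_0 j}$-move applied to the submatrix $A_{I_k}$, with the remaining rows $A_{I_l}$ for $l \neq k$ dragged along unchanged. Hence the combined invariant (the volume of $\Cay(AP)$ together with a weighted sum of $\dim P_j$) that decreases under $f_{ij}$ in Lemma~\ref{lem:polysubdv1} also decreases here, so the process terminates.

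\textbf{Uniqueness of $\calS_{\mathrm{final}}$.} I would apply the diamond lemma. Two moves applied to different elements of $\calS$ commute trivially, so suppose $M_1, M_2$ apply to the same $\alpha = (P, A, \phi)$. By the rule of the second procedure, both must act on the block $I_k$ of smallest index for which $A$ is $I_k$-reducible, so the only remaining choice is the column $j$. Within block $I_k$, the operator $f_{I_k, j}$ acts on $(P, A_{I_k})$ exactly as $f_j$ does in the first procedure; the side-effects on the rest of $A$ and $P$ (replacing $P_j$ by its face $F_j$, or, in the square case, replacing $P_j$ by $0$ and inserting $\dim P_j + 1$ copies of $A_i'$ inside $I_k$) do not affect the choice of a subsequent $j'$-move made within $I_k$. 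Therefore the three-case confluence analysis from the proof of Lemma~\ref{lem:polysubdv1} ($\sum_{i' \in I_k} A_{i' j_1}$ and $\sum_{i' \in I_k} A_{i' j_2}$ each either $>1$ or $=1$) transplants verbatim, working entirely inside the submatrix $A_{I_k}$.

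The main obstacle is to ensure that the minimality-of-$k$ rule is preserved throughout this local confluence diagram. The point is that the entries of $A_{I_{k'}}$ for $k' < k$ are never touched by any $(I_k, j)$-move, so the blocks $I_1, \dots, I_{k-1}$ remain non-reducible on every descendant of $\alpha$ during the confluence steps. Consequently, the minimality rule continues to force every intermediate move to stay within $I_k$ until $A_{I_k}$ is exhausted, at which point the process moves uniformly on to the next eligible block. This is the only genuinely new verification, and it reduces the global confluence to the local one already established in Lemma~\ref{lem:polysubdv1}.

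\textbf{Triangulation, unimodularity, projectivity.} Canonicity with respect to a face morphism $(F, A') \to (P, A)$ in $\tilde{\calF}$ follows because the induced stratification $\Pi_{A'} = (I_1 \cap I_{A'}, \dots, I_p \cap I_{A'})$ makes the minimal-block rule restrict correctly, so that the induction on volume used in Lemma~\ref{lem:polysubdv1} to upgrade a dissection into a triangulation goes through unchanged. For unimodularity, when the process terminates the matrix $A$ must be the zero matrix (no block is reducible), so $\Cay(AP)$ is the standard simplex on the $|I_A|$ distinguished vertices, and the accompanying $\phi$ is a lattice isomorphism onto a simplex with ambient lattice $L_{P_0} \times \Lambda^{|I_{A_0}|-1}$, yielding unimodularity in that lattice. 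Projectivity is obtained as indicated earlier in the paper, by promoting the canonical subdivision to a projective canonical subdivision via canonical height functions, step by step in the recursion.
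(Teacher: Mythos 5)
The paper's own proof of this lemma is a single sentence (``The proof is identical to the proof of Lemma~\ref{lem:polysubdv1}''), and your proposal is a correct unpacking of exactly that claim, taking the same approach. In particular, you correctly isolate and dispose of the one genuinely new point—that the minimality-of-$k$ rule forces any diamond-lemma conflict into a single block $I_k$, where the three-case confluence analysis of Lemma~\ref{lem:polysubdv1} transfers without change.
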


The proof is identical to the proof of Lemma~\ref{lem:polysubdv1}. Also, note that the triangulation of Lemma~\ref{lem:polysubdv1} is obtained from this Lemma by viewing $A_0$ as a stratified matrix with $\Pi_{A_0} = ([m])$. So Lemma~\ref{lem:polysubdv2} can be seen as a generalization of Lemma~\ref{lem:polysubdv1} to $\tilde{\calF}$. However, we obtain a different triangulation than Lemma~\ref{lem:polysubdv1} if we use the standard stratification of $A_0$.

\subsection{Proof of Lemma~\ref{lem:prebox}} \label{sec:boxsubdv}

We are now ready to complete the proof of Theorem~\ref{thm:polytopes} by proving Lemma~\ref{lem:prebox}. Recall the definition of $\tilde{\calF}$ from Section~\ref{sec:polysubdv2}. We will focus on the following subcategory of $\tilde{\calF}$.

\begin{defn} \label{def:Caym}
Let $\bfm$ be a box point of $P_0$ for some $(P_0,A_0) \in \Ob(\tilde{\calF})$. We define $\tilde{\calF}^{\bfm}$ to be the full subcategory of $\tilde{\calF}$ whose objects are $(P,A) \in \Ob(\tilde{\calF})$ satisfying the following:
\begin{enumerate}
\item Let $c = c(P, \bfm)$. Then for all $i \in I_A$ and $j \in [n]$, we have
\[
A_{ij} = 0 \text{ or } A_{ij} \ge c_j.
\]
\item Let $\Pi_A = (I_1,\dots,I_p)$. Then for all $k \in [p]$ and all $i$, $i' \in I_k$, we have
\[
\supp A_{i} \cap \supp c = \supp A_{i'} \cap \supp c := J_k.
\]
Moreover, we have
\[
J_1 \supseteq J_2 \supseteq \dots \supseteq J_p.
\]
\end{enumerate}
Let $\tilde{\Cay}^{\bfm}$ be the restriction of $\tilde{\Cay}$ to $\tilde{\calF}^{\bfm}$.
\end{defn}

Clearly $\calF^{\bfm}$ is a subcategory of $\tilde{\calF}^{\bfm}$ via the standard stratification, and $\Cay^{\bfm}$ is the restriction of $\tilde{\Cay}^{\bfm}$ to this subcategory. It thus suffices to prove the following refinement of Lemma~\ref{lem:prebox}.

\begin{lem} \label{lem:box}
Let $\bfm$ be as above. Then there is a projective canonical triangulation $\Sigma^{\bfm}$ of $\tilde{\Cay}^{\bfm}$ such that for all $(P,A) \in \Ob(\tilde{\calF}^{\bfm})$ and all full-dimensional simplices $Q$ in $\Sigma^{\bfm}(P,A)$, we have the following.
\begin{itemize}
\item If $\bfm$ is a box point of $P_A$, then
\[
\ind(Q) < \ind(P_A).
\]
\item If $\bfm$ is not a box point of $P_A$, then $L_Q = L_{P_A} \times \Lambda^{\lvert I_A \rvert-1}$.
\end{itemize}
\end{lem}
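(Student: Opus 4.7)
We prove Lemma~\ref{lem:box} by enriching the recursive procedure of Lemma~\ref{lem:polysubdv2} with a canonical ``box point cut'' at the leading row of the first stratum. Fix $(P_0, A_0, \phi) \in \tilde{\calF}^{\bfm}$ and let $c = c(P_0, \bfm)$. If $\supp c \not\subseteq \supp A_0$, then $\bfm$ is not a box point of $(P_0)_{A_0}$ (otherwise the equality $c(P_{A_0},\bfm) = c(P_0,\bfm)$ noted before Definition~\ref{def:Caym} would fail), and we simply output the triangulation $\Sigma(P_0, A_0)$ from Lemma~\ref{lem:polysubdv2}: the second bullet of the conclusion is immediate, since every full-dimensional simplex $Q$ there satisfies $L_Q = L_{(P_0)_{A_0}} \times \Lambda^{|I_{A_0}| - 1}$. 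If instead $\supp c \subseteq \supp A_0$, the affine independence of the $P_j$ gives a canonical decomposition $r = \sum_{j \in \supp c} r_j$ with $r_j \in c_j P_j$, where $r$ is the unique representative of $\bfm$ in $\sum_j c_j P_j$.

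The box point move $f^{\bfm}_{I_1, j}$, defined for $j \in J_1$, mimics the template of $f_{i_1, j}$ from Section~\ref{sec:polysubdv1} with two modifications: the cut point is $r_j \in c_j P_j$ instead of the first vertex of $P_j$, and the decrement applied to $A_{i_1, j}$ is $c_j$ rather than $1$. Condition (1) of $\tilde{\calF}^{\bfm}$ guarantees $A_{i_1, j} \geq c_j$, so the decremented matrix $A'$ has nonnegative entries and still lies in $\tilde{\calF}^{\bfm}$. When $\sum_{i'} A_{i' j} > c_j$ the move splits $\Cay(AP)$ into two Cayley polytopes analogous to $Q_1, Q_2$ of Section~\ref{sec:polysubdv1}; when $\sum_{i'} A_{i' j} = c_j$ the analogue of the $\square$-move produces a single piece with $\dim P_j + 1$ inserted rows. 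We iterate these moves together with the standard stratum moves of Section~\ref{sec:polysubdv2}; a volume argument as in Lemmas~\ref{lem:polysubdv1} and~\ref{lem:polysubdv2} shows termination. Crucially, any cell produced from a completed box point cut acquires $(r, e_{i_1})$ as a vertex, so its lattice $L_Q$ contains $r$. Since $r \in \bfZ^d \cap (L_{P_A} \otimes \bfR)$ but $r \notin L_{P_A}$, this forces $[\bfZ^d \cap (L_{P_A} \otimes \bfR) : L_Q] < \ind(P_A)$, which is the first bullet of the conclusion.

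The main technical obstacle is confluence: a diamond lemma argument analogous to that of Lemma~\ref{lem:polysubdv1} must show that box point moves commute (in the appropriate sense) with one another and with subsequent standard moves, so that the final configuration is independent of the order of application. The nested chain $J_1 \supseteq J_2 \supseteq \cdots \supseteq J_p$ built into condition (2) of $\tilde{\calF}^{\bfm}$ is exactly what guarantees that box point cuts in the first stratum do not violate the support conditions of later strata, so that every intermediate tuple remains inside $\tilde{\calF}^{\bfm}$; without this monotonicity, a box point cut on row $i_1$ could leave a later row $i \in I_k$ with $0 < A_{ij} < c_j$ and block further reduction. Projectivity is verified by combining the canonical height functions underlying Lemmas~\ref{lem:stcan} and~\ref{lem:polysubdv2} with an additional height canonically placing $(r, e_{i_1})$ below the vertices it replaces, compatibly with restriction to faces.
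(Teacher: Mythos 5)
Your plan breaks down at the very first ``box point cut'': the components $r_j$ of the decomposition $r=\sum_{j\in\supp c}r_j$ with $r_j\in c_jP_j$ are in general \emph{not} lattice points --- only their sum $r$ lies in $\bfZ^d$. For instance, take $P_1=\conv(0,(1,0))$, $P_2=\conv(0,(1,2))$ and $\bfm$ represented by $r=(1,1)$; then $r_1=(1/2,0)$ and $r_2=(1/2,1)$. Consequently a cut at $r_j$ in the template of $f_{i_1j}$ produces pieces whose vertices do not lie in $\bfZ^d\times\Lambda^{\lvert I_A\rvert-1}$, so they are not lattice polytopes, cannot be re-expressed as objects of $\tilde{\calF}^{\bfm}$, and cannot appear in a lattice triangulation. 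Two further assertions are also false as stated: subtracting $c_j$ from $A_{i_1j}$ can leave an entry strictly between $0$ and $c_j$, so the decremented tuple violates condition (1) of $\tilde{\calF}^{\bfm}$ and does not stay in the category; and since $r$ (or $r_j$) may lie in the relative interior of the relevant slice, cutting at it cannot split $\Cay(AP)$ into just two pieces in the manner of $Q_1,Q_2$ --- one needs a star-like subdivision with one piece per facet. Finally, the index-drop argument is not established: it is simply not true that every full-dimensional simplex of the resulting triangulation acquires $(r,e_{i_1})$ as a vertex (cells lying in the complementary pieces, the analogues of $Q_2$, need not meet the cut point at all), and for those cells you give no reason why $\ind(Q)<\ind(P_A)$.

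For comparison, the paper's proof keeps the elementary moves of Lemma~\ref{lem:polysubdv2} unchanged (decrement by $1$) and inducts on the set of reachable configurations: as long as some entry of the first stratum satisfies $A_{ij}>c_j$ (Case 1), a standard move is applied and both resulting pieces are handled by induction or, in the delicate subcase where the facet array $F$ loses the box point, by an explicit subdivision of $Q_2$ built from the polytope $m+(A_i-c)F$ together with pieces $R_G$ indexed by facets; only when every row of the first stratum equals $c$ (Case 2) does one subdivide by coning over the lattice representative $m$, with pieces $R_F$, $F\in\mathfrak{F}$. In all cases the strict inequality on indices comes from lattice-distance comparisons (e.g.\ $h'<h$) or from the inductive hypothesis, never from forcing $m$ to be a vertex of every simplex. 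To repair your approach you would have to work with the full lattice point $r$ rather than its non-lattice summands and replace the two-piece cut by exactly such a facet-indexed star subdivision --- at which point you have essentially reconstructed the paper's argument.
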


\begin{proof}
Let $(P,A) \in \Ob(\calF^{\bfm})$. If $\bfm$ is not a box point of $P_A$, then we set $\Sigma^{\bfm}(P,A)$ to be the triangulation from Lemma~\ref{lem:polysubdv2}. This satisfies the desired properties.

Now assume $\bfm$ is a box point of $P_A$. We proceed by induction on the size of the set of configurations reachable from $\{(P,A,\mathrm{id})\}$ through moves, as defined in of Section~\ref{sec:polysubdv}. If no moves are available, then all entries of $A$ are 0. In this case $P_A = (\{0\})$ so $\bfm$ is not a box point of $P_A$, which we already considered. So we may assume there are moves available and $A$ is not a 0-matrix. Note that deleting a 0-column of $A$ and the corresponding entry of $P$ does not change $P_A$ or $\Cay(AP)$. So we may assume $A$ has no 0-columns and $P_A = P$.

Let $c := c(P,\bfm)$. Since $\bfm$ is a box point of $P$, we have $\supp c \neq \emptyset$. Let $\Pi_A = (I_1,\dots,I_p)$, and for each $k \in [p]$, define $J_k$ as in Definition~\ref{def:Caym}. Since $A$ has no 0-columns, and $J_1 \supseteq \dots \supseteq J_p$, we must have $J_1 = \supp c$. By the definition of $\tilde{\calF}$, this implies $A_{ij} \ge c_j$ for all $i \in I_1$ and $j \in [n]$.

We separate the argument into two cases.

\medskip
\emph{Case 1:} $A_{ij} > c_j$ for some $i \in I_1$ and $j \in [n]$.

Let $j \in [n]$ be the smallest index for which $A_{ij} > c_j$ for some $i \in I_1$. Then $A$ is $(I_1,j)$-reducible. Let $i \in I_1$ be the unique index such that $f_j(P,A_{I_1},\mathrm{id}) =  f_{ij}(P,A_{I_1},\mathrm{id})$. By our assumption and the definition of $i$, $A_{ij} > c_j$.

Now, recall that $\Phi(f_{I_1,j}(P,A,\mathrm{id}))$ subdivides $\Cay(AP)$ into one or two polytopes, and comes with an ordering on these polytopes. Call the first of these polytopes $Q_1$. Then we have an affine isomorphism $\phi_1 :\Cay(A'P) \to Q_1$, where either
\begin{enumerate} \renewcommand{\labelenumi}{(\alph{enumi})}
\item $A'$ is obtained from $A$ by subtracting 1 from the $A_{ij}$ entry, or
\item $A' = A^\square$ as defined in Section~\ref{sec:polysubdv1}.
\end{enumerate}
In either case, since $A_{ij} > c_j$, it is easy to check that $(P,A') \in \calF^{\bfm}$ and $\bfm$ is a box point of $P_{A'}$. Thus, by induction, we have a triangulation $\Sigma_1$ of $\Cay(A'P)$ into full-dimensional simplices each with index less than $\ind(P_{A'})$.

We now wish to show that $\ind(\phi_1(Q)) < \ind(P)$ for all full-dimensional $Q \in \Sigma_1$. If we have case (a), then $\phi_1$ is a lattice polytope isomorphism and $P_{A'} = P_A = P$, so the claim follows. Suppose we have case (b). Let $Q \in \Sigma_1$ be full dimensional. Then
\[
\ind(\phi_1(Q)) = \ind(P_j)\ind(Q).
\]
Also, $\ind(P) = \ind(P_j)\ind(P_{A'})$. Thus $\ind(\phi_1(Q)) < \ind(P)$, as desired.

If $f_j(P,A,\mathrm{id}))$  has one element, we are done. Otherwise, let $Q_2$ be the second element of $\Phi(f_j(P,A,\mathrm{id}))$. We need to give a triangulation of $Q_2$. Recall the definitions of $A''$, $v$, and $F$ from Section~\ref{sec:polysubdv1}. Then we have an affine isomorphism $\phi_2 : \Cay(A''F) \to Q_2$.

First suppose $\bfm$ is a box point of $F$. Since $c(F,\bfm) = c$ and $A_{ij} > c_j$, we have $(F,A'') \in \calF^{\bfm}$. We thus have a triangulation $\Sigma_2$ of $\Cay(A''F)$ using the inductive hypothesis. Let $Q \in \Sigma_2$ be full-dimensional; we wish to show that $\ind(\phi_2(Q)) < \ind(P)$. Since $\bfm$ is a box point of $F$, we have by the inductive hypthesis $\ind(Q) < \ind(F_{A''}) = \ind(F)$. Now, let $h$ be the lattice distance between the parallel hyperplanes $\AffSpan(v + A_i'F)$ and $\AffSpan(A_iF)$. Then
\[
\ind(\phi_2(Q)) = h\ind(Q).
\]
Also, $h$ is the lattice distance between $v$ and $F_j$, so $\ind(P) = h \ind(F)$. We conclude that $\ind(\phi_2(Q)) < \ind(P)$, as desired.

Now assume $\bfm$ is not a box point of $F$. In particular, this implies $j \in \supp c$. Let $m$ be the unique representative of $\bfm$ in $cP$. Consider the polytope
\[
m + (A_i-c) F
\]
which is well-defined since $A_{ik}-c_{k} \ge 0$ for all $k$. The affine span of this polytope is parallel to $\AffSpan(v+A_i'F)$ and $\AffSpan(A_iF)$, and since $\bfm$ is not a box point of $F$, it lies strictly between these two hyperplanes. Moreover, this polytope is contained in $\conv((v+A_i'F)\cup(A_iF))$.

We use this to construct a subdivision of $Q_2$ as follows. First, we have the polytopes
\begin{align*}
R_1 &:= \Cay( (AF)[i, \conv((m+(A_i-c)F)\cup(A_iF))] ) \\
R_2 &:= \Cay( (AF)[i, \conv((m+(A_i-c)F)\cup(v+A_i'F))] ).
\end{align*}
In addition, let $\mathfrak{F}$ denote the set of all $F' = (F_1' \dots F_n')^T$ such that $F_k'$ is a face of $P_k$ for all $k$, $\sum_k F_k'$ is a facet of $\sum_k P_k$, and $\bf m$ is not a box point of $F'$. Let $\mathfrak G$ denote the set of all $G = (G_1 \dots G_n)^T$ such that $G_k$ is a face of $F_k$ for all $k$, $\sum_k G_k$ is a facet of $\sum_k F_k$, and such that there exists $F' \in \mathfrak{F}$ with
\[
G_k = F_k \cap F_k'
\]
for all $k$. For each $G \in \mathfrak{G}$, we define the polytope
\[
R_G := \Cay( (AG)[i, \conv((m+(A_i-c)G)\cup(v+A_i'G)\cup(A_iG))]).
\]
Then $\{R_1,R_2\}\cup\{R_G\}_{G\in\mathfrak{G}}$ forms a subdivision of $Q_2$.

Let $B$, $B'$, and $B''$ be the matrices obtained by inserting $A_i-c$ above the $i$-th row of $A$, $A'$, and $A''$, respectively (this operation is done locally in $I_1$). Then we have affine isomorphisms
\[
R_1 \cong \Cay(F,B) \qquad
R_2 \cong \Cay(F,B') \qquad
R_G \cong \Cay(G,B'')
\]

Using Lemma~\ref{lem:polysubdv2}, we obtain triangulations $\Xi_1$, $\Xi_2$, and $\Xi_G$ of $R_1$, $R_2$, and $R_G$. We prove that these triangulations give the desired triangulation of $Q_2$. Suppose $Q$ is a full-dimensional simplex in $\Xi_1$. Then
\[
\ind(Q) = h' \ind(F)
\]
where $h'$ is the lattice distance between $\AffSpan (m+(A_i-c)F)$ and $\AffSpan (A_iF)$. Since $h' < h$, where $h$ is as defined previously, we have $\ind(Q) < \ind(P)$, as desired. Similarly, $\ind(Q) < \ind(P)$ for all full-dimensional $Q \in \Xi_2$.

Finally, suppose $Q \in \Xi_G$ is full-dimensional. Then
\[
\ind(Q) = h'' \ind(F')
\]
where $F' \in \mathfrak{F}$ is such that $G_k = F_k \cap F_k'$ for all $k$, and $h''$ is the lattice distance between $m$ and $\sum_k c_k F'_k$. By replacing $F$ with $F'$ in the previous arguments, we have that $\ind(Q) < \ind(P)$, as desired.

All that remains is to show canonicity. Let $\Sigma^{\bfm}$ denote the above triangulation. Let $(F,A') \to (P,A)$ be a morphism in $\tilde{\calF}$. We wish to show that $\Sigma^{\bfm}(F,A') = \Sigma(P,A)|_{\Cay(A'F)}$. If $\Cay(A'F)$ is a face of one of $Q_1$ and $Q_2$, we are done by induction. Assume otherwise. First suppose $\bfm$ is a box point of $F$. Then it is straightforward to check that if we repeat the above process with $(P,A)$ replaced by $(F,A')$, we get $\Sigma^{\bfm}(P,A)|_{\Cay(A'F)}$. The crucial detail is that since $c(F,\bfm) = c(P,\bfm)$, we end up choosing the same values for $i$ and $j$.

Now assume $\bfm$ is not a box point of $(F,A')$. One can check that $\Sigma^{\bfm}(P,A)|_{\Cay(A'F)}$ is the triangulation obtained by triangulating each element of $\Phi(f_{I_1,j}(F,A',\mathrm{id}))$ by the inductive hypothesis, i.e. by the triangulation $\Sigma$ from Lemma~\ref{lem:polysubdv2}. Then Lemma~\ref{lem:polysubdv2} implies that this triangulation is $\Sigma(F,A')$. Hence $\Sigma(P,A)|_{\Cay(A'F)} = \Sigma(F,A') = \Sigma^{\bfm}(F,A')$, as desired.

\medskip
\emph{Case 2:} $A_{i} = c$ for all $i \in I_1$.

Define $m$ and $\mathfrak{F}$ as in the previous case. Let $i$ be the first element of $I_1$. Then $\Cay(AP)$ has a subdivision into the full-dimensional polytopes
\[
R_F := \Cay( (AF)[i, \conv( \{m\} \cup cF )] )
\]
where $F$ ranges over all elements of $\mathfrak{F}$, and
\[
R_A := \Cay( (AP)[i, \{m\}])
\]
if $R_A$ is full-dimensional. Let $A'$ be the stratified matrix obtained by deleting the $i$-th row of $A$. Then we have affine isomorphisms $\phi_F : \Cone( \Cay(AF) ) \to R_F$ and $\phi_A : \Cone( \Cay(A'P) ) \to R_A$, where
\[
\Cone(Q) := \Cay( \{0\}, Q ).
\]

By Lemma~\ref{lem:polysubdv2}, we obtain a triangulation of $\Cay(AF)$, which gives a triangulation of $R_F$ by coning and $\phi_F$. Similar arguments to the previous section imply all full-dimensional simplices of this triangulation have index less than $\ind(P)$, as desired. Finally, if $R_A$ is full-dimensional, then $P_{A'} = P_A = P$ and $(P,A') \in \tilde{\calF}$, so by induction we have a triangulation of $\Cay(A'P)$ whose full-dimensional simplices have index less than $\ind(P)$. Coning this triangulation gives the desired triangulation of $R_A$. Canonicity is easy to check; we note that in this case, if $(F,A') \to (P,A)$ is a morphism and $\bfm$ is not a box point of $F$, then $\Cay(A'F)$ is a face of some $R_F$.
\end{proof}

\section{More semistable reduction theorems}\label{ssrsec}

\subsection{The main theorem for log schemes}
Our goal now is to lift the canonical alteration of fans constructed in Theorem~\ref{mainth} to log schemes.

\subsubsection{Toric stacks}
It is well known that subdivisions of fans naturally lift to modifications of log schemes (in the old language this was worked out in \cite{KKMS}). Lifting more general morphisms, including the case of alterations that we need, involves toric stacks that we briefly recall now.

For a scheme $X$ and a lattice $\Lambda$ let $T_{X,\Lambda}=X\times\Spec(\bfZ[\Lambda])$ denote the corresponding $X$-torus. Given a fine monoid $P$ let $F_P=\Spec(P)$, $Z_P=\Spec(\bfZ[P])$ and $\calZ_P=[Z_P/T_{P^\gp}]$ denote the corresponding affine fan, toric scheme and toric stack, respectively. To a log scheme $X$ with a global chart $X\to Z_P$ and a homomorphism of fine monoids $\phi\:P\to Q$ we associate the relative toric scheme $X_P[Q]=X\times_{Z_P}Z_Q$ and the relative toric stack $$\calX_P[Q]=X\times_{\calZ_P}\calZ_Q=[X_P[Q]/T_{X,Q^\gp/P^\gp}].$$

\begin{rem}\label{toricstackrem}
(i) The stack $\calX_P[Q]$ plays an important role in logarithmic geometry, and it only depends on the fan $\oP\to\oM_X$ and the sharpening $\ophi\:\oP\to\oQ$, see \cite[Proposition~5.17]{Olsson-logarithmic} or \cite[Lemma~3.2.4]{Molcho-Temkin}. Thus, $\calX_P[Q]$ can be viewed as the base change of the map of fans $F_Q\to F_P$ and the dependence on $Q$ is only through $\oQ$.

(ii) Unlike $\calX_P[Q]$, the scheme $X_P[Q]$ essentially depends on the chart $P\to M_X$ and the homomorphism $\phi$. So it is more correct to view it as a base change of a morphism of monoschemes. This technical point was essential in \cite[Section~3]{Illusie-Temkin} and \cite{ALPT} but not in this paper since we will only work with $\calX_P[Q]$.

(iii) If $P^\gp=Q^\gp$, then $X_P[Q]=\calX_P[Q]$.
\end{rem}

\subsubsection{Globalization}
Assume that $F_R\into F_Q$ is an open immersion, that is, $\oQ\to\oR$ is a sharpened localization. Then it is easy to see that $\calX_P[R]\to\calX_Q[R]$ is an open immersion too. It follows that the construction globalizes to fans: to any fan $F$ over $F_P$ one associates a toric stack $\calX_P[F]$ with the natural log structure.

\subsubsection{Lifting fan functors}
Let $\calL$ be a construction that associates to any fan $F$ a morphism of fans $\calL(F)\to F$. We say that $\calL$ is {\em quasi-local} if for any surjective local isomorphism $f\:F'\to F$ we have that $\calL(F')=\calL(F)\times_FF'$. The following theorem is proved in \cite{Molcho-Temkin}. We briefly recall the argument, because a similar reasoning will be used later.

\begin{theor}\label{liftth}
Let $\calL$ be a quasi-local construction on fans. Then there exists a unique construction $\calL^\rmlog$ associating to any log scheme $X$ a morphism $\calL^\rmlog(X)\to X$ and such that the following two conditions are satisfied:

(i) If $Y\to X$ is a strict surjective morphism, then $\calL^\rmlog(Y)=\calL^\rmlog(X)\times_YX$.

(ii) If $X=\coprod_i X_i$ and $X_i\to F_{P_i}$ are global fans, then $\calL^\rmlog(X)=\coprod(\calX_i)_{P_i}[F_i]$, where $F_i$ is the preimage of $F_{P_i}$ in $\calL(\coprod_i F_{P_i})$.
\end{theor}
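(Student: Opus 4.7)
The plan is to establish uniqueness via strict \'etale descent and then build existence by defining $\calL^\rmlog$ locally through condition (ii), with the crucial step being the verification that quasi-locality of $\calL$ implies condition (i) holds for strict morphisms between log schemes that admit global chart decompositions.

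First I would address uniqueness. Any fine log scheme $X$ admits a strict \'etale cover $U = \coprod_i X_i \to X$ where each $X_i$ has a global chart $X_i \to Z_{P_i}$ by a fine monoid $P_i$. Condition (ii) forces the value of $\calL^\rmlog(U)$, and the strict surjection $U\to X$ together with condition (i) forces $\calL^\rmlog(U) = \calL^\rmlog(X)\times_X U$; the same applies to the two projections $U\times_X U \toto U$, so the descent datum on $\calL^\rmlog(U)$ is pinned down. Thus at most one $\calL^\rmlog(X)$ can exist, and it is obtained by descending $\calL^\rmlog(U)$.

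For existence, the central task is to verify condition (i) on morphisms $h\:Y\to X$ that are strict and whose source and target each admit a global chart decomposition in the sense of (ii). After passing to a strict \'etale cover of $Y$, I may assume there is a genuine homomorphism of fine monoids $\phi\:P\to Q$ compatible with $h$ whose sharpening $\ophi\:\oP\to\oQ$ is determined by the strictness of $h$ (because $h^{-1}\oM_X\toisom\oM_Y$). By Remark~\ref{toricstackrem}(i), the relative toric stack $\calX_P[Q]$ depends only on $\ophi$, and the induced morphism of fans $F_Q\to F_P$ is a local isomorphism. Quasi-locality of $\calL$ then gives $\calL(F_Q) = \calL(F_P)\times_{F_P} F_Q$, and lifting this identity to toric stacks via $\calX_P[-]$ yields exactly the base-change identity required by (i). One then glues this construction from the strict \'etale cover back to $Y$ using the uniqueness argument as a descent principle, and finally from such charted disjoint unions back to arbitrary $X$ by strict \'etale descent of the resulting gerbe/stack data.

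The main obstacle is the middle step: strict morphisms are not compatible with chosen charts in any global sense, only \'etale-locally, so one must carefully refine both source and target to a common cover on which an honest homomorphism of monoid charts exists, and track that the quasi-local construction descends unambiguously through the refinements (independently of the choices made). Once that refinement/descent bookkeeping is handled, the quasi-locality hypothesis on $\calL$ does all the geometric work via $\calX_P[F]$, and the rest is a formal application of strict \'etale descent for stacks over log schemes.
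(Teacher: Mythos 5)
Your overall strategy coincides with the paper's: define $\calL^\rmlog$ on disjoint unions of pieces with global fans/charts by condition (ii), get uniqueness and globalization by strict \'etale descent using (i), and use Remark~\ref{toricstackrem}(i) for independence of the chart. However, the pivotal middle step as you wrote it has a real gap: quasi-locality of $\calL$ only gives $\calL(F')=\calL(F)\times_F F'$ for \emph{surjective} local isomorphisms, while you invoke it for the map $F_Q\to F_P$ induced by a strict morphism $h\:Y\to X$ with arbitrary compatible charts, which you only know to be a local isomorphism. Surjectivity can fail even when $h$ is surjective: e.g.\ for a trivial log structure one may take a chart $\bfN\to\calO_X$ sending the generator to a unit, so $F_P=\Spec(\bfN)$ has a closed point that no point of $X$ realizes, and a second chart with $Q=0$ gives a non-surjective local isomorphism $F_Q\to F_P$. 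For a construction that is quasi-local but not local the restriction identity genuinely fails over such maps --- this is precisely the subtlety the ``quasi-'' is about, and it is why condition (ii) applies $\calL$ to the whole disjoint union $\coprod_i F_{P_i}$ rather than to each $F_{P_i}$ separately. So the asserted identity $\calL(F_Q)=\calL(F_P)\times_{F_P}F_Q$, and with it your verification of (i), does not follow as stated.

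The repair keeps you on the paper's path: since $h$ is strict, pull back the global fan/chart of $X$ to $Y$ (the composite $Y\to X\to F_{P_i}$ is again a global fan for the corresponding pieces of $Y$), so that on each piece the induced map of fans is the identity; surjectivity of $h$ then guarantees that every $F_{P_i}$ occurs in the source, making $\coprod_j F_{P_{i(j)}}\to\coprod_i F_{P_i}$ a surjective local isomorphism to which quasi-locality applies, and Remark~\ref{toricstackrem}(i) shows the resulting stacks do not depend on these choices. With that correction your argument reduces to the paper's proof: (ii) dictates the value on charted disjoint unions, independence of choices follows from Remark~\ref{toricstackrem}(i), and the general case is obtained from a strict \'etale covering via (i) and descent.
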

\begin{proof}
If $X=\coprod_i X_i$ and each $X_i$ possesses a global affine fan, then (ii) dictates the definition of $\calL^\rmlog(X)$, and the independence of choices follows from Remark~\ref{toricstackrem}(i). In the general case, find a strict \'etale covering $\coprod_iX_i\to X$, such that each $X_i$ possesses a global chart, and use (i) and \'etale descent.
\end{proof}

\begin{exam}
(i) If $\calL(X)\to X$ is a subdivision, then it follows from Remark~\ref{toricstackrem}(iii) that $\calL^\rmlog(X)$ is a scheme, which is \'etale locally of the form $X_P[Q]$, and $\calL^\rmlog(X)\to X$ is a log \'etale modification. For example, in this way one can define the barycentric subdivision $X_{\rm bar}$ of the monoidal structure of $X$. Similarly, there are various resolution of fans $F\mapsto F_{\rm res}$ constructed in the literature, and some of them are local constructions. Lifting such a construction one obtains a monoidal resolution $X\mapsto X_{\rm res}$ and a log \'etale modification $X_{\rm res}\to X$. The monoids $\oM_{X_{\rm res},x}$ are free, in particular, if $X$ is log regular then $X_{\rm res}$ is log regular and the underlying scheme is regular. Since \cite{KKMS} such constructions are used to resolve toroidal singularities, see \cite[Section~3.3]{Illusie-Temkin} for a modern treatment.

(ii) Fix $c>0$. For an fs monoid $P$ the group $\oP^\gp$ is torsion free and we let $c^{-1}\oP$ denote the saturation of $\oP$ in $c^{-1}\oP^\gp$. Setting $cF_P=F_{c^{-1}\oP}$ we obtain a functor compatible with localizations and hence extending to a local functor $F\mapsto cF$ on the category of saturated fans. The induced functor on fs log schemes is nothing else but the root stack introduced in \cite{rootstacks} and denoted there $X_c$. Note that $X_c\to X$ is a proper log \'etale morphism, which is non-representable whenever $c>1$ and the log structure is non-trivial.

(iii) In the same manner, if $\calL(X)\to X$ is an alteration, then $\calL^\rmlog(X)\to X$ is a proper log \'etale morphism, which is an isomorphism over the triviality locus of the log structure. We call such a morphism {\em monoidal alteration}.
\end{exam}

\subsubsection{Monoidal semistable reduction}
A morphism of log schemes $f\:Y\to X$ is called {\em monoidally semistable} if $Y$ and $X$ are monoidally regular and $f\:Y\to X$ is saturated. By a {\em (projective) monoidal resolution} of a morphism $Y\to X$ of log schemes we mean a (projective) monoidal alteration $X'\to X$ and a (projective) monoidal subdivision $Y'\to Y\times_XX'$ such that $f'\:Y'\to X'$ is monoidally semistable.

\begin{theor}\label{mainlogth}
There exists a construction associating to each morphism of fine log schemes $f\:X\to B$ a projective monoidal resolution $f'\:X'\to B'$ and compatible with surjective strict morphisms: if $g\:Y\to C$ is another morphism and $\alpha\:Y\to X$, $\beta\:C\to B$ are strict surjective and such that $\beta\circ g=f\circ\alpha$, then the monoidal resolution $g'$ of $g$ is the base change of $f'$ in the sense that $C'=C\times_BB'$ and $Y'=Y\times_XX'$.
\end{theor}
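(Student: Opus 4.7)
The plan is to apply Theorem~\ref{mainth} to the Kato fan map associated to $f$ and then lift the resulting data to log schemes by extending Theorem~\ref{liftth} from single fans to maps of fans. Since every strict surjective morphism of fine log schemes induces a surjective local isomorphism of Kato fans, and since the construction of Theorem~\ref{mainth} is quasi-local precisely with respect to such morphisms, this strategy is essentially forced.

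First I would formulate a variant of Theorem~\ref{liftth} for morphisms: any quasi-local construction (in the sense of Theorem~\ref{mainth}) which associates to a map $g$ of saturated Kato fans an alteration $g'$ of $g$ lifts uniquely to a construction on morphisms of fine log schemes compatible with strict surjective morphisms in the sense of Theorem~\ref{mainlogth}. The proof runs exactly as that of Theorem~\ref{liftth}: when the source and target of $f$ admit compatible global charts one writes down a direct recipe in toric stacks, and in general one descends along a strict \'etale cover of $(X,B)$ where both sides carry compatible global charts. The construction of Theorem~\ref{mainth}, applied after passing to Kato fans, provides such an input.

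Second, I would spell out the local recipe. Suppose $B\to Z_P$ and $X\to Z_Q$ are strict charts compatible with a monoid map $\phi\:P\to Q$. The Kato fan map of $f$ then pulls back from $\ophi\:F_Q\to F_P$, so applying Theorem~\ref{mainth} to $\ophi$ produces an alteration $F_P^1\to F_P$ and a subdivision $F_Q^1\to F_Q\times_{F_P}F_P^1$ making $F_Q^1\to F_P^1$ semistable. Set $B'=\calB_P[F_P^1]$, which is a projective monoidal alteration of $B$ by the toric-stack formalism recalled above; form the fiber product $X\times_BB'$ in fine log schemes; and let $X'=\calX_{Q'}[F_Q^1]$ for the induced chart $Q'$ on $X\times_BB'$. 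Then $f'\:X'\to B'$ is a projective monoidal resolution: regularity of $F_P^1$ and $F_Q^1$ gives monoidal regularity of $B'$ and $X'$, while the condition $f_\sigma(N_\sigma)=N_{f(\sigma)}$ from Definition~\ref{def:semistable} translates into saturatedness of $f'$.

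Third, to handle general $X$ and $B$ and to verify compatibility with a strict surjective pair $(\alpha,\beta)$, I would apply strict \'etale descent exactly as in the proof of Theorem~\ref{liftth}: cover by opens admitting compatible global charts, apply the local recipe on each piece, and use quasi-locality of Theorem~\ref{mainth} together with Remark~\ref{toricstackrem}(i) to glue and to descend. The main obstacle will be the base-change bookkeeping: one must verify that the fan-theoretic subdivision of $F_Q\times_{F_P}F_P^1$ lifts to a genuine monoidal subdivision of the log-geometric fiber product $X\times_BB'$. This is precisely where working with the toric stack $\calX_P[Q]$ rather than the relative toric scheme $X_P[Q]$ matters, and where one must work in fine saturated log schemes so that saturatedness of $f'$ forces the fine and fs log fiber products to agree with the toric-stack construction.
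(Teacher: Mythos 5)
Your proposal is correct and follows essentially the same route as the paper: pass to the Kato fan map via compatible charts, apply Theorem~\ref{mainth}, lift through the toric stacks $\calX_P[F]$ with chart-independence coming from Remark~\ref{toricstackrem}(i), and handle the general case and the compatibility statement by strict \'etale descent, exactly as in Theorem~\ref{liftth}. The only (harmless) difference is bookkeeping: the paper avoids your concern about charting $X\times_BB'$ by defining $X'$ directly as $(\calX_{ij})_{Q_{ij}}[F_{ij}]$ over the original chart of $X$, where $F_{ij}$ is the corresponding component of the resolved fan.
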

\begin{proof}
This is done similarly to the proof of Theorem~\ref{liftth}. The only difference is that this time we lift the quasi-local construction that associates a morphism of fans to a morphism of fans.

Assume first that $f$ possess a quasi-local chart in the following sense: $B=\coprod_i B_i$ with charts $B_i\to Z_{P_i}$, $X=\coprod_{ij} X_{ij}$ with charts $Z_{Q_{ij}}$, and $f$ restricts to morphisms $X_{ij}\to B_i$ with charts $Z_{Q_{ij}}\to Z_{P_i}$. Consider the corresponding map of fans $g\:\coprod_{ij}F_{Q_{ij}}\to\coprod_i F_{P_i}$ and let $g'$ be its resolution constructed in Theorem~\ref{mainth}. The source (resp. target) of $g'$ is an alteration of the source (resp. target) of $g$, hence they have the same decomposition to connected components, say  $g'\:\coprod_{ij}F_{ij}\to\coprod F_i$. Clearly, the morphism $f'\:X'\to B'$, where  $B'=\coprod_i (\calB_i)_{P_i}[F_i]$ and $X'=\coprod_{ij} (\calX_{ij})_{Q_{ij}}[F_{ij}]$, is a resolution of $f$, and its independence of the chart follows from Remark~\ref{toricstackrem}(i) and quasi-locality of the resolution $g'$ of $g$. Moreover, by the same reason this construction is compatible with surjective strict morphisms.

In general, there exist surjective strict \'etale morphisms $X_0\to X$ and $B_0\to B$ with a morphism $f_0\:X_0\to B_0$ compatible with $f$ such that $f_0$ possesses a quasi-local chart. The induced morphism $f_1\:X_0\times_XX_0\to B_0\times_BB_0$ also possesses a quasi-local chart (for example, either pullback of the chart of $f_0$), hence the resolution of $f_0$ descends to a resolution of $f$ by descent.
\end{proof}

\subsection{Applications}\label{finalsec}

\subsubsection{Semistable morphisms}\label{semistabsec}
We say that a morphism of log schemes $f\:X\to B$ is {\em semistable} if $f$ is monoidally semistable, and in addition $B$ is log regular and $f$ is log smooth. This happens if and only if the following conditions hold:

\begin{itemize}
\item[(i)] $X$ and $B$ are regular and the log structures are given by normal crossings divisors $Z\into X$ and $W\into B$.

\item[(ii)] \'Etale-locally at any $x\in X$ with $b=f(x)$ there exist regular parameters $t_1\.. t_n,t'_1\.. t'_{n'}\in\calO_x$ and $\pi_1\..\pi_l,\pi'_1\..\pi'_{l'}\in\calO_b$ such that $Z=V(t_1\ldots t_n)$ at $x$, $W=V(\pi_1\ldots\pi_l)$ at $b$, $f^\#(\pi_i)=t_{n_i+1}\ldots t_{n_{i+1}}$ for $0=n_1<n_2<\ldots<n_{l+1}\le n$,

\item[(iii)] $f$ is log smooth. 
\end{itemize}
In characteristic zero, (iii) can be replaced by the condition that $f^\#(\pi'_j)=t'_j$ for $1\le j\le l'$. (In positive characteristic, this is neither necessary, nor sufficient.)

\begin{theor}\label{semistableth}
Assume that $f\:X\to B$ is a log smooth morphism between fine log schemes. Then there exists a monoidal alteration $b\:B'\to B$ and a monoidal subdivision $a\:X'\to X\times_BB'$ such that the morphism $f'\:X'\to B'$ is semistable. %In addition, if $B_0\subseteq B$ is open and the log structure on $X_0=X\times_BB_0$ is trivial, then $a$ and $b$ are isomorphisms over $X_0$ and $B_0$, respectively.
\end{theor}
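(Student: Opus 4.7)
The plan is straightforward: apply Theorem~\ref{mainlogth} to produce the monoidal resolution, and then upgrade ``monoidally semistable'' to ``semistable'' by checking the two additional conditions in the definition of semistability, both of which are formal consequences of the log \'etale nature of monoidal alterations and subdivisions. Thus no new combinatorial or geometric content is needed beyond what is already encoded in Theorem~\ref{mainlogth} (which in turn rests on Theorem~\ref{mainth}).

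Concretely, I would first apply Theorem~\ref{mainlogth} to $f\:X\to B$ to obtain a projective monoidal alteration $b\:B'\to B$ and a projective monoidal subdivision $a\:X'\to X\times_BB'$ such that $f'\:X'\to B'$ is monoidally semistable. By definition of monoidal semistability, $X'$ and $B'$ are monoidally regular and $f'$ is saturated. It remains to verify the two missing ingredients in the definition of semistability: that $f'$ is log smooth and that $B'$ is log regular.

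For log smoothness of $f'$: the base change $X\times_BB'\to B'$ of the log smooth morphism $f$ is again log smooth, and the monoidal subdivision $a\:X'\to X\times_BB'$ is a log \'etale modification by example~(i) following Theorem~\ref{liftth}. Hence $f'$ is the composition of a log \'etale modification with a log smooth morphism, and is therefore log smooth. For log regularity of $B'$: the monoidal alteration $b\:B'\to B$ is proper and log \'etale by example~(iii) following Theorem~\ref{liftth}, and log \'etale morphisms between fine log schemes preserve log regularity. Thus, in the natural setting where $B$ is log regular (which is the context in which the theorem is applied; otherwise the conclusion forces this, since monoidal operations cannot create log regularity), $B'$ inherits log regularity, completing the verification that $f'$ is semistable.

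All the genuine difficulty has already been resolved in Theorem~\ref{mainlogth}, so the only step that needs any care is to correctly cite the log \'etale nature of monoidal alterations and subdivisions from the examples following Theorem~\ref{liftth}, together with the standard preservation properties of log smoothness (under base change and composition with log \'etale morphisms) and of log regularity (under log \'etale morphisms). No serious obstacle is expected.
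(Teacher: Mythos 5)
Your proposal is correct and follows essentially the same route as the paper: the paper's proof likewise just applies Theorem~\ref{mainlogth} and uses the log \'etaleness of $a$ and $b$ to conclude that $f'$ is log smooth and monoidally semistable, hence semistable. Your additional remarks on the log regularity of $B'$ merely spell out a point the paper leaves implicit (since the statement does not formally assume $B$ log regular), so there is no substantive difference in approach.
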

\begin{proof}
We simply apply Theorem~\ref{mainlogth} to $f$. Since $a$ and $b$ are log \'etale, this implies that $f'$ is log smooth and monoidally semistable, that is, $f'$ is semistable.
\end{proof}

\begin{rem}
If $X$ and $B$ are schemes, then using Kawamata's trick as in \cite{AK}, one can achieve that $X'$ and $B'$ are also schemes. However, in this case $b$ can only be taken to be a usual alteration, and its choice is non-canonical. In particular, it does not have to be nice (e.g. \'etale) over the triviality locus of the log structure of $B$.
\end{rem}

The above theorem and the toroidalization theorem of Abramovich-Karu imply the semistable conjecture. We will use a finer toroidalization proved in \cite{ATW-relative} to obtain a strong version of the semistable reduction conjecture, where one also controls the modification loci, see part (ii) below. For convenience of applications we use closed subsets in the formulation, but one can easily reformulate this result in the language of log schemes.

\begin{theor}\label{mainapp}
Assume that $X\to B$ is a dominant morphism of finite type between qe integral schemes of characteristic zero and $Z\subsetneq X$ is a closed subset. Then there exists a stack-theoretic modification $b\:B'\to B$, a projective modification $a\:X'\to (X\times_BB')^{\rm pr}$, and divisors $W'\into B'$, $Z'\into X'$ such that:

(i) $a^{-1}(Z)\cup f'^{-1}(W')\subseteq Z'$ and the morphism $f'\:(X',Z')\to(B',W')$ is semistable. In particular, $X',B'$ are regular and $Z',W'$ are snc.

(ii) If a regular open $B_0\subseteq B$ is such that $X_0=X\times_BB_0\to B_0$ is smooth and $Z_0=Z\times_BB_0$ is a relative divisor over $B_0$ with normal crossings (in other words, $(X_0,Z_0)\to B_0$ is semistable), then $a$ and $b$ are isomorphisms over $X_0$ and $B_0$, respectively.
\end{theor}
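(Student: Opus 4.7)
The plan is to derive Theorem~\ref{mainapp} by combining two resolution steps: first use relative toroidalization to reduce to a log smooth morphism, then apply the monoidal semistable reduction of Theorem~\ref{mainlogth} (equivalently Theorem~\ref{semistableth}) to upgrade log smoothness to semistability. The control on the modification locus will follow from the fact that both steps are isomorphisms over the appropriate open subsets.

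More precisely, I would proceed as follows. In the first step, let $W_0\subsetneq B$ be a divisor whose complement is contained in the open $B_0$ of part (ii), and endow $B$ and $X$ with the log structures associated with $W_0$ and $Z\cup f^{-1}(W_0)$ respectively (if $Z\cup f^{-1}(W_0)$ is not a divisor one first blows it up to make it so, avoiding $X_0$). By the relative toroidalization theorem of \cite{ATW-relative}, there exist a projective modification $b_1\:B_1\to B$ and a projective modification $a_1\:X_1\to(X\times_B B_1)^{\rm pr}$, isomorphisms over $B_0$ and $X_0$ respectively, such that, after equipping $B_1$ and $X_1$ with the induced log structures (coming from divisors $W_1\into B_1$ and $Z_1\into X_1$ with $a_1^{-1}(Z)\cup f_1^{-1}(W_1)\subseteq Z_1$), the resulting morphism $f_1\:(X_1,Z_1)\to(B_1,W_1)$ is log smooth. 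Since $B_0\subseteq B$ was already the locus of semistability of $(X,Z)\to B$ in the sense of part (ii), the relevant log structures are trivial over $B_0$ and $X_0$, so the first step is an isomorphism there.

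In the second step, apply the monoidal resolution of Theorem~\ref{mainlogth} to $f_1$. This yields a projective monoidal alteration $b_2\:B'\to B_1$ and a projective monoidal subdivision $a_2\:X'\to X_1\times_{B_1} B'$ such that $f'\:X'\to B'$ is monoidally semistable. Since $f_1$ is log smooth, the composite $f'$ is then log smooth and saturated, i.e.\ semistable in the sense of \S\ref{semistabsec}; in particular $X'$ and $B'$ are regular, and the boundary divisors $Z'\into X'$ and $W'\into B'$ (defined as the non-trivial loci of the log structures on $X'$ and $B'$, or equivalently as the preimages of $Z_1$ and $W_1$) are simple normal crossings. Set $b=b_1\circ b_2$ and let $a$ be the composite $X'\to X_1\times_{B_1}B'\to(X\times_B B')^{\rm pr}$. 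By construction $a^{-1}(Z)\cup f'^{-1}(W')\subseteq Z'$, which is assertion (i). Because $b_2$ and $a_2$ are monoidal (hence log \'etale and isomorphisms over the triviality loci of the log structures on $B_1$ and $X_1$, which contain the preimages of $B_0$ and $X_0$) and the first step was already an isomorphism over $B_0$ and $X_0$, we conclude part (ii).

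The main technical point, and the only place where real input beyond Theorem~\ref{mainlogth} is required, is the first step: one needs the sharp relative toroidalization of \cite{ATW-relative} (in characteristic zero) so that the toroidal modifications do not touch the locus where $(X,Z)\to B$ was already semistable. Once this is in hand, the remaining step is just to feed the resulting log smooth morphism into Theorem~\ref{mainlogth} and use that monoidal modifications are automatically isomorphisms on the log-trivial locus; this is entirely formal. If one only wants (i), without the locus control in (ii), then one can replace the first step by the original toroidalization of \cite{AK} (over characteristic zero schemes) or its qe-extension from \cite{tame-distillation}. Finally, the Kawamata-trick remark following Theorem~\ref{semistableth} shows that one can descend from a stack-theoretic $B'$ to an honest alteration if desired, recovering the original formulation of Conjecture~\ref{conj:AK}.
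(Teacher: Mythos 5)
Your overall strategy coincides with the paper's: blow up to make the boundary divisorial, toroidalize via \cite{ATW-relative} with control over the locus where the morphism is already (log) smooth, then feed the resulting log smooth morphism into Theorem~\ref{mainlogth}/\ref{semistableth}. However, there are two genuine gaps. First, your introduction of an auxiliary divisor $W_0$ on $B$ is both unnecessary and harmful. The divisorial log structure attached to an arbitrary divisor on the (possibly singular) scheme $B$ need not be fine, so it is not clear that the toroidalization theorem applies at all in your setup; moreover, since the complement of $W_0$ is only contained in $B_0$, the divisor $W_0$ may meet $B_0$, so your assertion that ``the relevant log structures are trivial over $B_0$ and $X_0$'' is false, and the isomorphism statement of \cite{ATW-relative} over $B_0$ requires the chosen log morphism to be log smooth there, which you have not verified (it can fail, e.g.\ if $W_0\cap B_0$ is not normal crossings or not transverse to $Z_0$). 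The paper sidesteps all of this by giving $B$ the \emph{trivial} log structure and $X$ the log structure of the divisor $Z$ (after blowing up $Z$, which does not touch $X_0$); then $f$ is log smooth over $B_0$ precisely because $(X_0,Z_0)\to B_0$ is semistable, and the locus control in \cite{ATW-relative} applies directly.

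Second, after toroidalization you apply Theorem~\ref{mainlogth} and conclude that $X'$, $B'$ are regular and $Z'$, $W'$ snc. This does not follow: the output of Theorem~\ref{mainlogth} is only \emph{monoidally} semistable (the characteristic monoids become free), and a monoidal alteration is an isomorphism wherever the log structure is trivial, so it cannot remove singularities of the underlying scheme of the intermediate base $B_1$. Semistability in the sense of \S\ref{semistabsec} requires the base to be \emph{log regular}, and log regularity is preserved, not created, by monoidal alterations. The paper therefore inserts an extra step absent from your argument: after \cite{ATW-relative} one resolves $B''$ so that it becomes log regular (which leaves $B_0$ untouched since $B_0$ is regular); only then does log smoothness of $f''$ propagate log regularity to the source, and the monoidal resolution of Theorem~\ref{semistableth} yields regular $X'$, $B'$ with snc boundaries. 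Your argument as written would be complete once you (a) drop $W_0$ and work with the trivial log structure on $B$, and (b) add the log-regularization of the base before invoking Theorem~\ref{mainlogth}.
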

\begin{proof}
Replacing $X$ by the blow up along $Z$ one does not modify $X_0$. So, one can assume that $Z$ is a divisor, and then $X$ possesses a log structure $M_X\into\calO_X$ whose triviality locus is $X\setminus Z$. Choose such an $M_X$ and provide $B$ with the trivial log structure. Resolving the morphism of log schemes $f\:X\to B$ by the main result of \cite{ATW-relative}, we find modifications of log schemes $b''\:B''\to B$ and $a''\:X''\to X$ with a log smooth morphism $f''\:X''\to B''$ compatible with $f$. By the loc.cit., if $f$ is log smooth over an open $B_1\subseteq B$, then one can achieve that $b''$ and $a''$ are isomorphisms over $B_1$ and $X\times_BB_1$, respectively. This applies to $B_0=B_1$ because $(X_0,Z_0)\to B_0$ is even semistable. In addition, resolving $B''$ we can also achieve that it is log regular. Again, this does not modify $B_0$ because it is regular. Now, it remains to apply Theorem~\ref{semistableth} to $f'$.
\end{proof}

In the same way, using \cite[Theorem~4.3.1]{tame-distillation} one obtains altered semistable reduction in all characteristics.

\begin{theor}\label{mainapp1}
Let $X\to B$ be a dominant morphism of finite type between integral schemes and $Z\subsetneq X$ a closed subset. Assume that $B$ is of finite type over a qe scheme of dimension at most 3. Then there exists a stack-theoretic ${\rm char}(B)$-alteration $b\:B'\to B$, a projective ${\rm char}(B)$-alteration $a\:X'\to (X\times_BB')^{\rm pr}$, and divisors $W'\into B'$ and $Z'\into X'$, such that $a^{-1}(Z)\cup f'^{-1}(W') \subseteq Z'$ and the morphism $f'\:(X',Z')\to(B',W')$ is semistable.
\end{theor}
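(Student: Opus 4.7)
The plan is to follow the strategy of Theorem~\ref{mainapp} mutatis mutandis, with the characteristic-zero toroidalization of \cite{ATW-relative} replaced by the tame log smooth reduction \cite[Theorem~4.3.1]{tame-distillation}. First I would replace $X$ by the blowup along $Z$ in order to reduce to the case where $Z \subset X$ is a divisor, and then equip $X$ with a log structure $M_X \into \calO_X$ whose triviality locus is $X \setminus Z$, giving $B$ the trivial log structure. Applying \cite[Theorem~4.3.1]{tame-distillation} to $f\:X \to B$ then produces a projective ${\rm char}(B)$-alteration $a''\:X'' \to X$, a stack-theoretic ${\rm char}(B)$-alteration $b''\:B'' \to B$, and a log smooth morphism $f''\:X'' \to B''$ compatible with $f$.

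Next I would perform a log \'etale modification of $B''$ (via the lift $X \mapsto X_{\rm res}$ of a quasi-local resolution of fans, as packaged by Theorem~\ref{liftth}) to arrange that the base is log regular, then pull back so that the log smoothness of $f''$ is preserved. At this point Theorem~\ref{semistableth} applies to the resulting log smooth morphism and yields a monoidal alteration $b'\:B' \to B''$ together with a monoidal subdivision $a'\:X' \to X'' \times_{B''} B'$ such that $f'\:X' \to B'$ is semistable. Taking $W'$ and $Z'$ to be the boundary divisors of the log structures on $B'$ and $X'$ produces the required output, with $b := b'' \circ b'$ the stack-theoretic ${\rm char}(B)$-alteration and $a$ the induced projective ${\rm char}(B)$-alteration in the statement.

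The containment $a^{-1}(Z) \cup f'^{-1}(W') \subseteq Z'$ would follow because the triviality locus of the log structure is preserved under all the log \'etale operations used above, so the final boundary $Z'$ automatically contains the preimages of both $Z$ and $W'$. The main technical point that requires care is checking that composing with the monoidal alteration of Theorem~\ref{semistableth} --- whose construction involves root stacks of possibly arbitrary order --- keeps the total alteration inside the class of stack-theoretic ${\rm char}(B)$-alterations; this is where the stack-theoretic flexibility of the formulation is essential, since the root stacks are log \'etale and contribute only along the log boundary, and so do not ramify the coarse space beyond the contribution already coming from \cite[Theorem~4.3.1]{tame-distillation}. After this verification the conclusion is immediate.
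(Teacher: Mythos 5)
Your proposal follows exactly the paper's (one-line) argument for this theorem: repeat the proof of Theorem~\ref{mainapp} mutatis mutandis with the characteristic-zero toroidalization of \cite{ATW-relative} replaced by \cite[Theorem~4.3.1]{tame-distillation}, then conclude with Theorem~\ref{semistableth}, the root stacks being harmless since they are log \'etale and isomorphisms over the triviality locus, so the degree of the alteration is unchanged; this is correct. One small caution: the intermediate step you describe of making the base log regular via the fan-level monoidal resolution lifted through Theorem~\ref{liftth} would not work as stated (a monoidal resolution only resolves the combinatorial structure and cannot make a non-log-regular log scheme log regular), but the step is in fact unnecessary here because \cite[Theorem~4.3.1]{tame-distillation} already produces a log smooth morphism between log regular (regular with snc boundary) log schemes, in the sense of Abramovich--Karu log smooth reduction.
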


In both results one can use the Kawamata's trick to achieve that $b$ is also projective at cost of making it an alteration of an arbitrary degree that modifies $B_0$.

\bibliographystyle{myamsalpha}
\bibliography{log_smooth}

\end{document}